 \newtheorem{thm}{Theorem}[section]
 \newtheorem{prop}[thm]{Proposition}
 \def\k{\mathbbm{k}}
 \newcommand{\Hom}{\mathrm{Hom}}
\title[Derived equivalences of DG algebras and their cohomology rings]{Derived equivalences of DG algebras are not governed by their cohomology rings}
\author{X.-F. Mao}
\address{Department of Mathematics, Shanghai University, Shanghai 200444, China}
\email{xuefengmao@shu.edu.cn}
\author{R.-K. Lu}
\address{Department of Mathematics, Shanghai University, Shanghai 200444, China}
\email{lrk.wer@gmail.com}
\date{}
\subjclass[2010]{Primary 16E45, 16E65, 16W20,16W50}
\keywords{DG free algebra, Koszul, homologically smooth, Calabi-Yau,  derived equivalence, derived Picard group}
\begin{document}

\maketitle \def\abstactname{abstact}
\begin{abstract}
Let $\mathscr{A}$ and $\mathscr{B}$ be two connected cochain DG algebra such that $\mathscr{A}^{\#}=\mathscr{B}^{\#}$ and the cohomology rings $H(\mathscr{A})$ and  $H(\mathscr{B})$ are isomorphic.  We give examples to show that $\mathscr{A}$ and $\mathscr{B}$ are not necessarily derived equivalent, thereby answering to a question of Dugas.
\end{abstract}

\maketitle
\section*{introduction}
Let $\mathscr{A}$ and $\mathscr{B}$ be cochain DG algebras.
In \cite[Section $7$]{Dug}, Dugas proposed the problem: if the cohomology rings $H(\mathscr{A})$ and $H(\mathscr{B})$ are derived equivalent, then whether the DG algebras $\mathscr{A}$ and $\mathscr{B}$ are derived equivalent or not. Motivated by the problem above, Pan, Peng and Zhang \cite{PPZ} construct derived equivalences of differential graded algebras which are endomorphism algebras of the objects from a triangle in the homotopy category of DG algebras. And the cohomology rings of these differential graded endomorphism algebras are also derived equivalent under some conditions. This gives an affirmative answer to the above problem in some special cases. However, it is still an open question whether a derived equivalence of the cohomology rings implies the DG algebras are derived equivalent. One can see the problem was mentioned again in \cite{PPZ}.

The motivation of this paper is to show that $\mathscr{A}$ and $\mathscr{B}$ are not necessarily derived equivalent even under the hypothesis that $H(\mathscr{A})\cong H(\mathscr{B})$ and $\mathscr{A}^{\#}=\mathscr{B}^{\#}$. To be more specific, we find concrete counter-examples in DG free algebras,  which were introduced and systematically studied in \cite{MXYA}. Recall that a connected cochain DG algebra $\mathscr{A}$ is called DG free if its underlying graded algebra $$\mathscr{A}^{\#}=\k\langle x_1,x_2,\cdots, x_n\rangle,\,\, \text{with}\,\, |x_i|=1,\,\, \forall i\in \{1,2,\cdots, n\}.$$ By \cite[Theorem 2.4]{MXYA},  $\partial_{\mathscr{A}}$ is uniquely determined by a crisscrossed ordered $n$-tuple of $n\times n$ matrixes.
Recall that an ordered $n$-tuple $(M^1,M^2,\cdots, M^n)$ of $n\times n$ matrixes with each
\begin{align*}
M^i=(c^i_1,c^i_2,\cdots,c^i_n)=\left(
                         \begin{array}{c}
                           r^i_1 \\
                           r^i_2  \\
                           \vdots  \\
                           r^i_n
                         \end{array}
                       \right)
\end{align*}
is called crisscross if
$
\sum\limits_{k=1}^n[c_j^kr_k^i-c_k^ir_j^k]=(0)_{n\times n}, \forall i,j\in \{1,2,\cdots,n\}.
$
To be precise and specific, the differential $\partial_{\mathscr{A}}$ of a DG free algebra with $n$ generators associated with a crisscross ordered $n$-tuple $(M^1,M^2,\cdots, M^n)$ is
defined  by
$$\partial_{\mathscr{A}}(x_i)=(x_1,x_2,\cdots, x_n)M^i\left(
                         \begin{array}{c}
                           x_1 \\
                           x_2  \\
                           \vdots  \\
                           x_n
                         \end{array}
                       \right), i=1,2,\cdots,n. $$
\\
\begin{bfseries}
Example\,
\end{bfseries}
Let $\mathscr{A}_1$ and $\mathscr{A}_2$ be two DG free algebras such that $\mathscr{A}_1^{\#}=\k\langle x_1,x_2,x_3\rangle$, $\partial_{\mathscr{A}_1}(x_1)=x_3^2,\partial_{\mathscr{A}_1}(x_2)=x_2^2,\partial_{\mathscr{A}_1}(x_3)=0$ and
$\mathscr{A}_2^{\#}=\k\langle y_1,y_2,y_3\rangle$, $\partial_{\mathscr{A}_2}(y_1)=y_3^2$, $\partial_{\mathscr{A}_2}(y_2)=y_1y_3+y_3y_1$, $\partial_{\mathscr{A}_2}(y_3)=0$. The crisscross ordered $3$-tuples associated with $\mathscr{A}_1$ and $\mathscr{A}_2$ are respectively
$$\left(\left(
    \begin{array}{ccc}
      0 & 0 & 0 \\
      0 & 0 & 0 \\
      0 & 0 & 1 \\
    \end{array}
  \right), \left(
    \begin{array}{ccc}
      0 & 0 & 0 \\
      0 & 1 & 0 \\
      0 & 0 & 0 \\
    \end{array}
  \right), \left(
    \begin{array}{ccc}
      0 & 0 & 0 \\
      0 & 0 & 0 \\
      0 & 0 & 0 \\
    \end{array}
  \right)\right)$$
and
$$\left(\left(
    \begin{array}{ccc}
      0 & 0 & 0 \\
      0 & 0 & 0 \\
      0 & 0 & 1 \\
    \end{array}
  \right), \left(
    \begin{array}{ccc}
      0 & 0 & 1 \\
      0 & 0 & 0 \\
      1 & 0 & 0 \\
    \end{array}
  \right), \left(
    \begin{array}{ccc}
      0 & 0 & 0 \\
      0 & 0 & 0 \\
      0 & 0 & 0 \\
    \end{array}
  \right)\right).$$
Applying \cite[Theorem $3.1$]{MXYA}, one sees that $\mathscr{A}_1$ and $\mathscr{A}_2$ are not isomorphic.
In Section \ref{Aone} and Section \ref{Atwo}, we prove
 $$H(\mathscr{A}_1)=\frac{\k[\lceil x_3 \rceil, \lceil x_1x_3+x_3x_1 \rceil]}{( \lceil x_3\rceil^2 )}\,\, \text{and}\,\, H(\mathscr{A}_2)=\frac{\k[\lceil y_3\rceil,\lceil y_1^2+y_2y_3+y_3y_2\rceil]}{(\lceil y_3\rceil^2 )}.$$
To explain that $\mathscr{A}_1$ and $\mathscr{A}_2$ are not derived equivalent, we compute the derived Picard group of $\mathscr{A}_1$ and $\mathscr{A}_2$ by applying the calculation methods from \cite{MYH}. We have  $$\mathrm{DPic}(\mathscr{A}_1)\cong \mathbb{Z}\times  \left\{\left(
                                            \begin{array}{cc}
                                               a & b \\
                                               0 & a^2 \\
                                            \end{array}
                                          \right)\quad| \quad a\in \k^{\times},b\in \k\right\}$$
 and $$\mathrm{DPic}(\mathscr{A}_2)\cong \mathbb{Z}\times \left\{\left(
                                                                                       \begin{array}{ccc}
                                                                                         a & b & c \\
                                                                                         0 & a^2 & 2ab \\
                                                                                         0 & 0 & a^3 \\
                                                                                       \end{array}
                                                                                   \right)|\,\, a\in \k^*,b, c\in \k\right\}.$$
One sees that $\mathrm{DPic}(\mathscr{A}_1)\not \cong \mathrm{DPic}(\mathscr{A}_2)$. Since derived Picard group of an algebra is a derived invariant, we conclude that $\mathscr{A}_1$ and $\mathscr{A}_2$ are not derived equivalent. This gives a negative answer to Dugas's problem. The reason for this phenomenon lies in the fact that much intrinsic information of a non-formal DG algebra $\mathscr{A}$ is missing after taking the cohomology functor. To retrieve the lost information, we should endow the cohomology ring with suitable $A_{\infty}$-algebra structures. Indeed, it is well known that $H(\mathscr{A})$ admits an $A_{\infty}$-algebra structure called the minimal model of $\mathscr{A}$ such that there is an $\mathscr{A}_{\infty}$-quasi-isomorphism $\mathscr{A}\to H(\mathscr{A})$ (cf. \cite{Kad, Mer, Kel2}).

\section{some basics on dg modules and dg algebras}
In this section, we introduce some notation and conventions on DG algebras and DG modules.
There is some
overlap here with the papers \cite{MW1, MW2,FJ}. It is assumed that
the reader is familiar with basics on DG modules, triangulated
categories and derived categories. If this is not the case, we refer
to  \cite{Nee, Wei} for more details on them.

Throughout this paper, $\k$ is an algebraically closed field of characteristic $0$.
For any $\k$-vector space $V$, we write $V^*=\Hom_{\k}(V,\k)$. Assume that $\{e_i|i\in I\}$ is a basis of a finite-dimensional $\k$-vector space $V$.  We denote the dual basis of $V$ by $\{e_i^*|i\in I\}$, i.e., $\{e_i^*|i\in I\}$ is a basis of $V^*$ such that $e_i^*(e_j)=\delta_{i,j}$. For any graded vector space $W$ and $j\in\Bbb{Z}$, the $j$-th suspension $\Sigma^j W$ of $W$ is a graded vector space defined by $(\Sigma^j W)^i=W^{i+j}$.

A cochain DG algebra is
a graded
$\k$-algebra $\mathscr{A}$ together with a differential $\partial_{\mathscr{A}}: \mathscr{A}\to \mathscr{A}$  of
degree $1$ such that
\begin{align*}
\partial_{\mathscr{A}}(ab) = (\partial_{\mathscr{A}} a)b + (-1)^{|a|}a(\partial_{\mathscr{A}} b)
\end{align*}
for all graded elements $a, b\in \mathscr{A}$.
For any DG algebra $\mathscr{A}$,  we denote $\mathscr{A}\!^{op}$ as its opposite DG
algebra, whose multiplication is defined as
 $a \cdot b = (-1)^{|a|\cdot|b|}ba$ for all
graded elements $a$ and $b$ in $\mathscr{A}$. We denote by $\mathscr{A}^i$ its $i$-th homogeneous component.  The differential $\partial_{\mathscr{A}}$ is a sequence of linear maps $\partial_{\mathscr{A}}^i: \mathscr{A}^i\to \mathscr{A}^{i+1}$ such that $\partial_{\mathscr{A}}^{i+1}\circ \partial_{\mathscr{A}}^i=0$, for all $i\in \Bbb{Z}$.  If $\partial_{\mathscr{A}}\neq 0$, $\mathscr{A}$ is called
non-trivial. The cohomology graded algebra of $\mathscr{A}$ is the graded algebra $$H(\mathscr{A})=\bigoplus_{i\in \Bbb{Z}}\frac{\mathrm{ker}(\partial_{\mathscr{A}}^i)}{\mathrm{im}(\partial_{\mathscr{A}}^{i-1})}.$$
 For any cocycle element $z\in \mathrm{ker}(\partial_{\mathscr{A}}^i)$, we write $\lceil z \rceil$ as the cohomology class in $H(\mathscr{A})$ represented by $z$.  A cochain algebra $\mathscr{A}$ is called connected if its underlying graded algebra $\mathscr{A}^{\#}$ is a connected graded algebra. One sees that $H(\mathscr{A})$ is a connected graded algebra if $\mathscr{A}$ is a connected cochain DG algebra.
 For any connected DG algebra $\mathscr{A}$,
   we write $\frak{m}$ as the maximal DG ideal $\mathscr{A}^{>0}$ of $\mathscr{A}$.
Via the canonical surjection $\varepsilon: \mathscr{A}\to \k$, $\k$ is both a DG
$\mathscr{A}$-module and a DG $\mathscr{A}\!^{op}$-module. It is easy to check that the enveloping DG algebra $\mathscr{A}^e = \mathscr{A}\otimes \mathscr{A}\!^{op}$ of $\mathscr{A}$
is also a connected cochain DG algebra with $H(\mathscr{A}^e)\cong H(\mathscr{A})^e$,  $$\partial_{\mathscr{A}^e}=\partial_{\mathscr{A}}\otimes \mathscr{A}^{op} + \mathscr{A}\otimes
\partial_{\mathscr{A}^{op}} \quad \text{and}\quad \frak{m}_{\mathscr{A}^e}=\frak{m}_{\mathscr{A}}\otimes \mathscr{A}^{op}+\mathscr{A}\otimes \frak{m}_{\mathscr{A}^{op}}.$$

  The derived category of left DG modules over $\mathscr{A}$ (DG $\mathscr{A}$-modules for short) is denoted by $\mathscr{D}(\mathscr{A})$.  A DG $\mathscr{A}$-module  $M$ is compact if the functor $\Hom_{\mathscr{D}(\mathscr{A})}(M,-)$ preserves
all coproducts in $\mathscr{D}(\mathscr{A})$.
 By \cite[Proposition 3.3]{MW2},
a DG $\mathscr{A}$-module  is compact if and only if it admits a minimal semi-free resolution with a finite semi-basis. The full subcategory of $\mathscr{D}(\mathscr{A})$ consisting of compact DG $\mathscr{A}$-modules is denoted by $\mathscr{D}^c(\mathscr{A})$.

 Let $\mathscr{A}$ be a connected cochain DG algebra.  A DG $\mathscr{A}^e$-module $X$ is called tilting if there is a DG $\mathscr{A}^e$-module $Y$ such that $X\otimes_{\mathscr{A}}^L Y\cong \mathscr{A}$ and $Y\otimes_{\mathscr{A}}^LX\cong \mathscr{A}$ in $\mathscr{D}(\mathscr{A}^e)$. The DG $\mathscr{A}^e$-module $Y$ is called a quasi-inverse of $X$. It is easy to see that the quasi-inverse of a tilting DG module is also tilting. And the quasi-inverse of a given tilting DG $\mathscr{A}^e$-module is unique up to isomorphism in $\mathscr{D}(\mathscr{A}^e)$.
If $X_1$ and $X_2$ are two tilting DG modules, then so is $X_1\otimes_{\mathscr{A}}^L X_2$ by the associativity of $-\otimes_{\mathscr{A}}^L-$.
A DG $\mathscr{A}^e$-module $X$ is tilting iff any one of the following conditions holds (cf. \cite[Proposition $2.3$]{MYH}).
\begin{enumerate}
\item The functors $X\otimes_{\mathscr{A}}^L-$ and $-\otimes_{\mathscr{A}}^L X$ are auto-equivalences of $\mathscr{D}(\mathscr{A}^{e})$.
\item The functors $X\otimes_{\mathscr{A}}^L-$ and $-\otimes_{\mathscr{A}}^L X$ are auto-equivalences of $\mathscr{D}(\mathscr{A})$ and $\mathscr{D}(\mathscr{A}^{op})$ respectively.
\item The functors $X\otimes_{\mathscr{A}}^L-$ and $-\otimes_{\mathscr{A}}^L X$ are auto-equivalences of $\mathscr{D}^c(\mathscr{A})$ and $\mathscr{D}^c(\mathscr{A}^{op})$ respectively.
\item $\langle {}_{\mathscr{A}}X\rangle =\mathscr{D}^c(\mathscr{A})$,  $\langle X_A\rangle = \mathscr{D}^c(\mathrm{DGmod}\,\,A^{op})$,
 and the adjunction morphisms $$\mathscr{A}\to R\Hom_{\mathscr{A}}(X,X) \quad \text{and}  \quad \mathscr{A}\to R\Hom_{\mathscr{A}^{op}}(X,X)$$ in $\mathscr{D}^c(\mathscr{A}^{e})$ are isomorphisms.
\end{enumerate}
The derived Picard group $\mathrm{DPic}(\mathscr{A})$ of $\mathscr{A}$ is defined as the abelian group,  whose elements are the isomorphism classes of tilting DG $\mathscr{A}^e$-modules in $\mathscr{D}(\mathscr{A}^{e})$. In $\mathrm{DPic}(\mathscr{A})$, the product of the classes of $X$ and $Y$ is given by the class of $X\otimes_{\mathscr{A}}^L Y$. Note that the unit element of $\mathrm{DPic}(\mathscr{A})$ is the class of $\mathscr{A}$ and $\mathrm{DPic}(\mathscr{A})$ is a derived invariant. The derived Picard group of an algebra is introduced independently by Yekutieli and Rouquier-Zimmermann \cite{Yek1,RZ}.
Due to the well known Rickard's theory \cite{Ric1,Ric2}, one sees that the derived Picard group of an algebra is an invariant of derived category.
There has been many works on properties and computations of derived Picard groups of various kinds of algebras. For examples, the derived Picard group of a commutative unital ring has been computed in some cases \cite{Kel3,RZ,Yek1,Fau}, some properties of the derived Picard groups of an order are given in \cite{Zim}, the derived Picard group of a finite dimensional algebra over an algebraic closed field is proved to be a locally algebraic group in \cite{Yek2},  and some structures and calculations of the derived Picard groups of finite dimensional hereditary algebras over an algebraic closed field are presented in \cite{MY}. In \cite{Kel1}, Keller interprets Hochschild cohomology as the Lie algebra of the derived Picard group and
deduces that it is preserved under derived equivalences. Recently, Volkov and Zvonareva \cite{VZ} compute the derived Picard groups of selfinjective Nakayama algebras.  It is also interesting for one to consider the case of DG algebras.
For graded commutative DG algebras, Yekutieli \cite{Yek3} has done some research on their derived Picard groups and dualizing DG modules.
In \cite{MYH}, it is proved that the derived Picard group of a homologically smooth and Koszul connected cochain DG algebra is isomorphic to the opposite group of the derived category of its Ext-algebra.

In the rest of this section, we review some definitions of various homological properties of DG algebras. Let $\mathscr{A}$ be a connected cochain DG algebra.
\begin{enumerate}
\item  If $\dim_{k}H(R\Hom_{\mathscr{A}}(\k,\mathscr{A}))=1$ (resp. $\dim_{k}H(R\Hom_{\mathscr{A}\!^{op}}(\k,\mathcal{A}))=1)$, then $\mathscr{A}$ is called left (resp. right) Gorenstein (cf.\cite{MW2});
\item  If ${}_{\mathscr{A}}k$, or equivalently ${}_{\mathscr{A}^e}\mathscr{A}$, has a minimal semi-free resolution with a semi-basis concentrated in degree $0$, then $\mathscr{A}$ is called Koszul (cf. \cite{HW});
\item If ${}_{\mathscr{A}}k$, or equivalently the DG $\mathscr{A}^e$-module $\mathscr{A}$ is compact, then $\mathscr{A}$ is called homologically smooth (cf. \cite[Corollary 2.7]{MW3});
\item If $\mathscr{A}$ is homologically smooth and $$R\Hom_{\mathscr{A}^e}(\mathcal{A}, \mathscr{A}^e)\cong
\Sigma^{-n}\mathscr{A}$$ in  the derived category $\mathrm{D}((\mathscr{A}^e)^{op})$ of right DG $\mathscr{A}^e$-modules, then $\mathscr{A}$ is called an $n$-Calabi-Yau DG algebra  (cf. \cite{Gin,VdB}).
\end{enumerate}
Note that $\mathscr{A}$ is called Gorenstein
if $\mathscr{A}$ is both left Gorenstein and right Gorenstein.  We emphasize that homologically smooth DG algebras are called `regular' DG algebras in \cite{HW} and \cite{MW2}.  For any homologically smooth DG algebra $\mathscr{A}$, it is left Gorenstein if and only if it is right Gorenstein by \cite[Remark 7.6]{MW2}.
Let $\mathscr{A}$ be a Koszul cochain DG algebra. He-Wu proved in \cite{HW} that $\mathscr{A}$ is homologically smooth and Gorenstein if and only if its $\mathrm{Ext}$-algebra $H(R\Hom_{\mathscr{A}}(\mathbbm{k},\mathbbm{k}))$ is a Frobenius algebra.
Recently, the first author \cite{Mao}  shows that the result is also right for non-Koszul case.
A special family of Gorenstein homologically smooth DG algebras are called Calabi-Yau.
 Since Ginzburg introduced Calabi-Yau (DG) algebras in \cite{Gin}, they have been extensively studied due to their links to
mathematical physics, representation theory and non-commutative algebraic geometry.
By \cite[Proposition 6.4]{MXYA}, any Calabi-Yau DG algebra is Gorenstein. While the converse is generally not true. The first author and J.-W. He \cite{HM} showed that a Koszul connected cochain DG algebra to is Calabi-Yau if and only if its Ext-algebra is a symmetric Frobenius algebra. It has been generalized to non-Koszul cases in \cite{Mao}.

\section{$H(\mathscr{A}_1)$ and $\mathrm{DPic}(\mathscr{A}_1)$}\label{Aone}
In this section, we will compute $H(\mathscr{A}_1)$, $\mathrm{DPic}(\mathscr{A}_1)$ and study the homological properties of $\mathscr{A}_1$.
\begin{prop}\label{cohaone}The cohomology ring $H(\mathscr{A}_1)$  of $\mathscr{A}_1$ is
$$\frac{\k[\lceil x_3 \rceil, \lceil x_1x_3+x_3x_1 \rceil]}{( \lceil x_3\rceil^2 )}.$$
\end{prop}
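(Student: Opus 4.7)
The plan is to reduce the computation to the sub-DG-algebra $\mathscr{B} := \k\langle x_1, x_3\rangle \subseteq \mathscr{A}_1$, which is closed under $\partial_{\mathscr{A}_1}$ since $\partial(x_1) = x_3^2 \in \mathscr{B}$ and $\partial(x_3) = 0$, and then to compute $H(\mathscr{B})$ directly.

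First I would observe that $\mathscr{A}_1 = \mathscr{B} \oplus I$ as $\k$-subcomplexes, where $I$ is the $\k$-span of words containing at least one occurrence of $x_2$. Every $w \in I$ factors uniquely as $w = \alpha x_2 \beta$ with $\alpha \in \mathscr{B}$ (so that $\alpha x_2$ marks the leftmost $x_2$ in $w$) and $\beta \in \mathscr{A}_1$. Define $h : \mathscr{A}_1 \to \mathscr{A}_1$ of degree $-1$ by $h(\alpha x_2 \beta) := (-1)^{|\alpha|}\alpha\beta$ on $I$ and $h|_{\mathscr{B}} := 0$. Using $\partial(x_2) = x_2^2$ to expand $\partial(\alpha x_2 \beta)$ by Leibniz, the three contributions to $h\partial(\alpha x_2 \beta)$ pair with the two contributions to $\partial h(\alpha x_2 \beta)$ so that all terms cancel except for the surviving $\alpha x_2 \beta = w$. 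Hence $I$ is acyclic and the inclusion $\mathscr{B} \hookrightarrow \mathscr{A}_1$ is a quasi-isomorphism.

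Next, verifying directly that $x_3$ and $x_1 x_3 + x_3 x_1$ are cocycles (for the latter, $\partial(x_1 x_3 + x_3 x_1) = x_3^3 - x_3^3 = 0$) and that $x_3^2 = \partial(x_1)$ is a coboundary, the assignments $u := \lceil x_3 \rceil$ and $v := \lceil x_1 x_3 + x_3 x_1 \rceil$ define a graded algebra map $\Phi: \k[u, v]/(u^2) \to H(\mathscr{B})$. To show $\Phi$ is an isomorphism, I would use the integer invariant $s := (\text{word length}) + (\text{number of } x_1\text{'s})$ on monomials of $\mathscr{B}$, which is preserved by $\partial$ since replacing one $x_1$ by $x_3^2$ leaves $s$ unchanged. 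This gives a splitting $\mathscr{B} = \bigoplus_{s \geq 0}\mathscr{B}^{(s)}$ into finite-dimensional subcomplexes (with $\dim \mathscr{B}^{(s)}$ equal to a Fibonacci number). A case-by-case analysis of each $\mathscr{B}^{(s)}$---parametrizing its monomials by compositions of $s - 2p$ into $p+1$ non-negative parts and computing the induced boundary with Koszul signs---shows that $H^*(\mathscr{B}^{(s)})$ is one-dimensional, concentrated in degree $\lceil 2s/3 \rceil$ when $s \not\equiv 2 \pmod{3}$, and vanishes otherwise. Summing over $s$ yields $\dim H^n(\mathscr{B}) = 1$ for every $n \geq 0$, with $v^k \in \mathscr{B}^{(3k)}$ and $u v^k \in \mathscr{B}^{(3k+1)}$ providing explicit representatives in degrees $2k$ and $2k+1$ respectively, so $\Phi$ is an isomorphism.

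The main obstacle is the combinatorial cohomology computation of each finite-dimensional $\mathscr{B}^{(s)}$: one must carefully track the Koszul signs in the induced differential on the composition complex in order to verify the dimension pattern modulo $3$, and separately confirm that the representatives $v^k$ and $u v^k$ are nonzero in the corresponding $H^*(\mathscr{B}^{(s)})$.
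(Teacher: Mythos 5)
Your first step is correct and is a genuinely different (and arguably cleaner) way of disposing of $x_2$ than the paper's: the paper keeps $x_2$ throughout and eliminates it inside a degree-by-degree induction, whereas you split off the subcomplex $I$ of words containing $x_2$ and contract it. The homotopy does work: writing $w=\alpha x_2\beta$ with $\alpha\in\mathscr{B}$ and $h(w)=(-1)^{|\alpha|}\alpha\beta$, one computes $h\partial(w)=(-1)^{|\alpha|+1}\partial(\alpha)\beta+w-\alpha\partial(\beta)$ and $\partial h(w)=(-1)^{|\alpha|}\partial(\alpha)\beta+\alpha\partial(\beta)$, so $(\partial h+h\partial)|_{I}=\mathrm{id}_I$; since $\mathscr{A}_1=\mathscr{B}\oplus I$ as complexes, this gives $H(I)=0$ and $H(\mathscr{A}_1)\cong H(\mathscr{B})$. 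The auxiliary grading $s=(\text{word length})+(\text{number of }x_1\text{'s})$ is also correctly seen to be preserved by $\partial$, and the claimed answer for each $\mathscr{B}^{(s)}$ is consistent with the truth (I checked $s\le 4$).

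However, the argument stops exactly where the substance of the proposition begins. The assertion that $H^*(\mathscr{B}^{(s)})$ is one-dimensional in degree $\lceil 2s/3\rceil$ for $s\not\equiv 2\pmod 3$ and zero otherwise \emph{is} the computation of $H(\mathscr{A}_1)$, and ``a case-by-case analysis of each $\mathscr{B}^{(s)}$'' cannot establish it: there are infinitely many $s$, the complexes grow (Fibonacci dimensions), and no uniform or inductive mechanism is supplied --- you yourself flag this as the unresolved ``main obstacle.'' An Euler-characteristic count is consistent with the claim but only bounds $\dim H(\mathscr{B}^{(s)})$ from below, so the real issue, proving $\dim H(\mathscr{B}^{(s)})\le 1$, is untouched. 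This is a genuine gap, though it is fillable within your framework: for $s\ge 2$ there is a short exact sequence of complexes $0\to x_3\mathscr{B}^{(s-1)}\to\mathscr{B}^{(s)}\to \Sigma^{-1}\mathscr{B}^{(s-2)}\to 0$ (the quotient being identified with $x_1\mathscr{B}^{(s-2)}$ with differential $x_1w\mapsto -x_1\partial w$), whose connecting homomorphism sends $\lceil w\rceil$ to $\lceil x_3w\rceil$, i.e.\ is left multiplication by $\lceil x_3\rceil$; an induction on $s$ through the resulting long exact sequences, tracking the explicit generators $\lceil (x_1x_3+x_3x_1)^k\rceil$ and $\lceil x_3(x_1x_3+x_3x_1)^k\rceil$, yields the claimed pattern. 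Without some such argument (or a degree induction as in the paper), the proof is incomplete.
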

\begin{proof}
The differential of $\mathscr{A}_1$ is defined by $\partial_{\mathscr{A}_1}(x_1)=x_3^2, \partial_{\mathscr{A}_1}(x_2)=x_2^2,\partial_{\mathscr{A}_1}(x_3)=0$.   It is easy for one to see that $H^0(\mathscr{A}_1)=\k$ and $H^1(\mathscr{A}_1)=\k\lceil x_3\rceil$. Since $$\mathscr{A}_1^2=\bigoplus\limits_{i=1}^3\bigoplus\limits_{j=1}^3\k x_ix_j,$$ any cocycle element in $\mathscr{A}_1^2$ can be written as $\sum\limits_{i=1}^3\sum\limits_{j=1}^3c_{ij}x_ix_j$ for some $c_{ij}\in \k$ and $i,j\in \{1,2,3\}$. We have
\begin{align*}
0&=\partial_{\mathscr{A}_1}(\sum\limits_{i=1}^3\sum\limits_{j=1}^3c_{ij}x_ix_j)=c_{11}x_3^2x_1-c_{11}x_1x_3^2+c_{12}x_3^2x_2-c_{12}x_1x_2^2\\
& +(c_{13}-c_{31})x_3^3+c_{21}x_2^2x_1-c_{21}x_2x_3^2+c_{23}x_2^2x_3-c_{32}x_3x_2^2.
\end{align*}
We have $c_{11}=c_{12}=c_{21}=c_{23}=c_{32}=0$ and $c_{13}=c_{31}$. Hence
$$\mathrm{ker}(\partial_{\mathscr{A}_1}^2)= \k(x_1x_3+x_3x_1)\oplus \k x_2^2\oplus \k x_3^2.$$ Since $\mathrm{im}(\partial_{\mathscr{A}_1}^1)=\k x_2^2\oplus \k x_3^2$, we have $H^2(\mathscr{A}_1)=\k \lceil x_1x_3+x_3x_1\rceil$.
Assume that we have proved $H^{2i-1}(\mathscr{A}_1)=\k \lceil x_3(x_1x_x+x_3x_1)^{i-1}\rceil$ and $H^{2i}(\mathscr{A}_1)=\k \lceil (x_1x_3+x_3x_1)^i\rceil$, $i\ge 1$. We should show that \begin{align*}
H^{2i+1}(\mathscr{A}_1)&=\k \lceil x_3(x_1x_3+x_3x_1)^i\rceil  \\
 \text{and}\,\, H^{2i+2}(\mathscr{A}_1)&=\k \lceil (x_1x_3+x_3x_1)^{i+1}\rceil.
\end{align*}
 For any cocycle element in $\mathscr{A}_1^{2i+1}$, we may write it as $x_1a_1+x_2a_2+x_3a_3$, $a_1,a_2,a_3\in \mathscr{A}_1^{2i}$.
 We have \begin{align*}
 0=&\partial_{\mathscr{A}_1}(x_1a_1+x_2a_2+x_3a_3)\\
 =&x_3^2a_1-x_1\partial_{\mathscr{A}_1}(a_1)+x_2^2a_2-x_2\partial_{\mathscr{A}_1}(a_2)-x_3\partial_{\mathscr{A}_1}(a_3) \\
 =&x_3[x_3a_1-\partial_{\mathscr{A}_1}(a_3)]+x_2[x_2a_2-\partial_{\mathscr{A}_1}(a_2)]-x_1\partial_{\mathscr{A}_1}(a_1).
 \end{align*}
 Then $\partial_{\mathscr{A}_1}(a_3)=x_3a_1$, $\partial_{\mathscr{A}_1}(a_2)=x_2a_2$ and $\partial_{\mathscr{A}_1}(a_1)=0$. Since $Z^{2i}(\mathscr{A}_1)\cong H^{2i}(\mathscr{A}_1)\oplus B^{2i}(\mathscr{A}_1)$, we may let $a_1=l(x_1x_3+x_3x_1)^i+\partial_{\mathscr{A}_1}(\chi)$ for some $l\in \k$ and $\chi\in \mathscr{A}_1^{2i-1}$.
 We have $\partial_{\mathscr{A}_1}(a_3)=lx_3(x_1x_3+x_3x_1)^i+x_3\partial_{\mathscr{A}_1}(\chi)$.  Then $a_3=x_3\lambda +\omega$ for some $\lambda\in \mathscr{A}^{2i}$, and $\omega\in Z^{2i}(\mathscr{A}_1)$. We get $$\partial_{\mathscr{A}_1}(\chi)+l(x_1x_3+x_3x_1)^i=\partial_{\mathscr{A}_1}(\lambda).$$
 If $l\neq 0$, we get $\lceil x_1x_3+x_3x_1\rceil^i=0$ in $H^{2i}(\mathscr{A}_1)$ which contradicts with $H^{2i}(\mathscr{A}_1)=\k \lceil (x_1x_3+x_3x_1)^i\rceil$. So $l=0$ and $a_3=-x_3\chi+t(x_1x_3+x_3x_1)^i+\partial_{\mathscr{A}_1}(\eta)$, for some $t\in \k$ and $\eta\in \mathscr{A}_1^{2i-1}$. Since $\partial_{\mathscr{A}_1}(a_2)=x_2a_2$, we have $a_2=x_2\delta$ for some $\delta\in Z^{2i-1}(\mathscr{A})$.

  Then
 \begin{align*}
 \lceil x_1a_1+x_2a_2 +x_3a_3\rceil & =\lceil x_1 \partial_{\mathscr{A}_1}(\chi)+x_2^2\delta-x_3^2\chi +tx_3(x_1x_3+x_3x_1)^i+x_3\partial_{\mathscr{A}_1}(\eta)\rceil \\
 &=\lceil  \partial_{\mathscr{A}_1}(-x_1\chi+x_2\delta-x_1\eta)  +tx_3(x_1x_3+x_3x_1)^i \rceil \\
 &=t\lceil x_3(x_1x_3+x_3x_1)^i   \rceil.
 \end{align*}
 So $H^{2i+1}(\mathscr{A}_1)=\k\lceil x_3(x_1x_3+x_3x_1)^i   \rceil$.

 Now, let $x_1b_1+x_2b_2+x_3b_3$ be an arbitrary cocycle element in $\mathscr{A}_1^{2i+2}$. Then we have
   \begin{align*}
 0=&\partial_{\mathscr{A}_1}(x_1b_1+x_2b_2+x_3b_3)\\
 =&x_3^2b_1-x_1\partial_{\mathscr{A}_1}(b_1)+x_2^2b_2-x_2\partial_{\mathscr{A}_1}(b_2)-x_3\partial_{\mathscr{A}_1}(b_3) \\
 =&x_3[x_3b_1-\partial_{\mathscr{A}_1}(b_3)]-x_1\partial_{\mathscr{A}_1}(b_1)+x_2[x_2b_2-\partial_{\mathscr{A}_1}(b_2)].
 \end{align*}
 So $\partial_{\mathscr{A}_1}(b_3)=x_3b_1$, $\partial_{\mathscr{A}_1}(b_1)=0$ and $x_2b_2=\partial_{\mathscr{A}_1}(b_2)$. Since
 $$Z^{2i+1}(\mathscr{A}_1)\cong H^{2i+1}(\mathscr{A}_1)\oplus B^{2i+1}(\mathscr{A}_1),$$ we may let $b_1=rx_3(x_1x_3+x_3x_1)^i+\partial_{\mathscr{A}_1}(\omega)$ for some $r\in \k$ and $\omega\in \mathscr{A}_1^{2i}$.
 Then $\partial_{\mathscr{A}_1}(b_3)=rx_3^2(x_1x_3+x_3x_1)^i+x_3\partial_{\mathscr{A}_1}(\omega)$.
 So $$b_3=rx_1(x_1x_3+x_3x_1)^i-x_3\omega+sx_3(x_1x_3+x_3x_1)^i+\partial_{\mathscr{A}_1}(\varphi)$$ for some $s\in \k$ and $\varphi\in \mathscr{A}_1^{2i}$. Since $x_2b_2=\partial_{\mathscr{A}_1}(b_2)$, we have $b_2=x_2\beta$ for some $\beta\in Z^{2i+1}(\mathscr{A})$.
 Therefore,
 \begin{align*}
 &\quad \quad \lceil x_1b_1+x_2b_2 +x_3b_3\rceil \\
 &=\lceil r(x_1x_3+x_3x_1)^{i+1}+\partial_{\mathscr{A}_1}[-x_1\omega+x_2\beta+sx_1(x_1x_3+x_3x_1)^i-x_3\varphi]\rceil\\
 &= r\lceil (x_1x_3+x_3x_1)^{i+1}\rceil.
 \end{align*}
 Thus $H^{2i+2}(\mathscr{A}_1)=\k\lceil (x_1x_3+x_3x_1)^{i+1}   \rceil$. By the induction above, we obtain that
 \begin{align*}
H^{2n-1}(\mathscr{A}_1)&=\k \lceil x_3(x_1x_3+x_3x_1)^{n-1}\rceil  \\
 \text{and}\,\, H^{2n}(\mathscr{A}_1)&=\k \lceil (x_1x_3+x_3x_1)^{n}\rceil, \forall n\ge 1.
\end{align*}
 Since $x_3(x_1x_3+x_3x_1)-(x_1x_3+x_3x_1)x_3=x_3^2x_1-x_1x_3^2=\partial_{\mathscr{A}_1}(x_1^2)$, we have
 $$\lceil x_3\rceil \cdot \lceil x_1x_3+x_3x_1\rceil = \lceil x_1x_3+x_3x_1\rceil\cdot \lceil x_3\rceil$$ in $H(\mathscr{A}_1)$. Hence $H(\mathscr{A}_1)=\k[\lceil x_3\rceil,\lceil x_1x_3+x_3x_1\rceil]/(\lceil x_3\rceil^2)$.
\end{proof}

By Proposition \ref{cohaone}, one sees that $H(\mathscr{A}_1)$ is an AS-Gorenstein graded algebra.
Hence $\mathscr{A}_1$ is a Gorenstein DG algebra by \cite[Proposition 1]{Gam}. It is natural for one to ask whether it is Koszul, homologically smooth and Calabi-Yau. From the proof of the following proposition, the readers will see that we rely heavily on the construction of the minimal semi-free resolution of ${}_{\mathscr{A}_1}\k$.

\begin{prop}\label{aone}
The DG free algebra $\mathscr{A}_1$ is a Kozul Calabi-Yau DG algebra with
 $$\mathrm{DPic}(\mathscr{A}_1)\cong \mathbb{Z}\times  \left\{\left(
                                            \begin{array}{cc}
                                               a & b \\
                                               0 & a^2 \\
                                            \end{array}
                                          \right)\quad| \quad a\in \k^{\times},b\in \k\right\}.$$
\end{prop}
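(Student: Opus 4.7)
The plan is in three stages. First I establish Koszulity of $\mathscr{A}_1$ by constructing an explicit minimal semi-free resolution $F\to \k$ of ${}_{\mathscr{A}_1}\k$ whose semi-basis sits entirely in cohomological degree $0$. Second I compute the Ext-algebra $E=H(\Hom_{\mathscr{A}_1}(F,\k))$ and verify it is symmetric Frobenius so that, by the Koszul case of the main theorem of \cite{HM} quoted in the excerpt, $\mathscr{A}_1$ is Calabi--Yau (and in particular homologically smooth). Third I apply the main result of \cite{MYH} to identify $\mathrm{DPic}(\mathscr{A}_1)\cong\mathrm{DPic}(E)^{op}$, and compute the latter via the graded algebra automorphisms of $E$.

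For the resolution, I start with $F_0=\mathscr{A}_1\cdot e_{\emptyset}$, $|e_{\emptyset}|=0$, and augmentation $e_{\emptyset}\mapsto 1$. To kill the degree-$1$ classes $x_i e_{\emptyset}$ I adjoin semi-basis elements $e_1,e_2,e_3$ in cohomological degree $0$ with $\partial e_i=x_i e_{\emptyset}$. Continuing inductively, each fresh cocycle forced upon us can be expressed as $\mathfrak{m}$-times a semi-basis element at the previous level, so the next semi-basis element can again be chosen in cohomological degree $0$. The structural observation that makes this work, checked layer by layer, is that each $\partial x_i$ is a homogeneous quadratic in the $x_j$'s, so the semi-basis organises itself into a bar-like tower concentrated in cohomological degree $0$. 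The multiplication on $E$ is then read off from composing chain lifts $F\to\Sigma^{-i}F$ of elements of $E^i$, and finite-dimensionality of $E$ gives homological smoothness.

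For stage two I exhibit a one-dimensional socle in $E$ and check that the induced trace pairing is non-degenerate and symmetric, hence $E$ is symmetric Frobenius; the criterion from \cite{HM} cited in the excerpt then upgrades Koszulity to the Calabi--Yau property. For stage three I use that $E$ is a connected finite-dimensional graded local $\k$-algebra over an algebraically closed field, so its graded-bimodule Picard group is trivial and inner graded automorphisms are trivial, giving $\mathrm{DPic}(E)\cong\mathbb{Z}\times\mathrm{Aut}_{\mathrm{gr}}(E)$ with the $\mathbb{Z}$-factor coming from the suspension. I then parametrise an automorphism of $E$ on a minimal generating set: the generators will turn out to be one element $\alpha$ in cohomological degree $1$ and one element $\beta$ in cohomological degree $2$ with $\alpha^2$ linearly independent of $\beta$, so a graded algebra automorphism must act by $\alpha\mapsto a\alpha$ and $\beta\mapsto a^2\beta + b\alpha^2$ for some $a\in\k^{\times}$ and $b\in\k$. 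This reproduces the upper-triangular $2\times 2$ matrix presentation, and the $\mathbb{Z}$-factor carries over unchanged on passing to the opposite group.

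The technical heart is the first stage: $\partial x_2=x_2^2$ feeds $x_2$ back into its own image, and $\partial x_1=x_3^2$ couples $x_1$ with $x_3$, so every semi-basis element we adjoin creates new cocycles whose cancellation must itself be controlled. Tracking this cascade, confirming the semi-basis stays entirely in cohomological degree $0$, and extracting from it a finite-dimensional $E$ whose structure is transparent enough to read off the Frobenius pairing and the full list of graded automorphisms, will be the main obstacle; once the resolution is in hand, the remaining verifications in stages two and three are routine.
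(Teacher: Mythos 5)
Your overall strategy (minimal semi-free resolution with degree-zero semi-basis, then the symmetric Frobenius criterion of \cite{HM}, then $\mathrm{DPic}$ via automorphisms of the Ext-algebra as in \cite{MYH}) matches the paper's, but stage one as you describe it fails at the very first step. You propose to adjoin $e_1,e_2,e_3$ with $\partial e_i=x_ie_{\emptyset}$; for $i=1,2$ this is not a differential, since $\partial^2e_1=\partial_{\mathscr{A}_1}(x_1)e_{\emptyset}=x_3^2e_{\emptyset}\neq 0$ and likewise $\partial^2e_2=x_2^2e_{\emptyset}\neq 0$. One may only adjoin semi-basis elements whose differentials are cocycles representing the cohomology classes to be killed, and $H^1(\mathscr{A}_1)=\k\lceil x_3\rceil$ is one-dimensional. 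The correct minimal resolution is far smaller than your ``bar-like tower'': $F^{\#}=\mathscr{A}_1^{\#}\oplus\mathscr{A}_1^{\#}\Sigma e_{x_3}\oplus\mathscr{A}_1^{\#}\Sigma e_z$ with $\partial_F(\Sigma e_{x_3})=x_3$ and $\partial_F(\Sigma e_z)=x_1+x_3\Sigma e_{x_3}$, and it terminates there; the generator $x_2$ never enters at all (it is not a cocycle, and $x_2^2=\partial_{\mathscr{A}_1}(x_2)$ is already a boundary). The real work is verifying $H(F)=\k$, for which your proposal offers no mechanism. Note also that a bar-type resolution, while concentrated in degree $0$, is neither minimal nor finite, so it yields neither the Ext-algebra directly nor the compactness of ${}_{\mathscr{A}_1}\k$ needed for homological smoothness.

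The second genuine gap is your description of $E$. From the three-element semi-basis one gets $\dim_{\k}E=3$ and $E\cong\k[X]/(X^3)$, concentrated in cohomological degree $0$ (forced by Koszulity) and generated by a single element. Your picture of two generators $\alpha$ in degree $1$ and $\beta$ in degree $2$ with $\alpha^2$ linearly independent of $\beta$ is incompatible with this (it would force $\dim_{\k}E\geq 4$), and the parametrisation $\beta\mapsto a^2\beta+b\alpha^2$ is unjustified for an independent generator $\beta$: nothing would tie its leading coefficient to $a^2$. The coupling that produces $\left(\begin{smallmatrix} a & b\\ 0 & a^2\end{smallmatrix}\right)$ comes precisely from the relation $\beta=\alpha^2$: an automorphism of $\k[X]/(X^3)$ sends $X\mapsto aX+bX^2$ with $a\in\k^{\times}$, and then necessarily $X^2\mapsto a^2X^2$. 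Moreover, if you insisted on automorphisms graded with respect to your degrees $1$ and $2$, you would obtain only $\k^{\times}$, which is too small; the paper computes the full automorphism group of $\k[X]/(X^3)$ as an ungraded (equivalently, degree-zero concentrated) algebra and then invokes \cite[Remark 4.4, Theorem 4.3]{MYH}.
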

\begin{proof}
By definition,  $\mathscr{A}_1^{\#}=\k\langle x_1,x_2,x_3\rangle$ and the differential $\partial_{\mathscr{A}_1}$ is defined by $$\partial_{\mathscr{A}_1}(x_1)=x_3^2, \partial_{\mathscr{A}_1}(x_2)=x_2^2, \partial_{\mathscr{A}_1}(x_3)=0.$$
According to the constructing procedure of the minimal semi-free resolution in \cite[Proposition 2.4]{MW1}, we get a minimal semi-free resolution $f: F\stackrel{\simeq}{\to} {}_{\mathscr{A}_1}\k$, where $F$ is a semi-free DG $\mathscr{A}_1$-module with $$F^{\#}=\mathscr{A}_1^{\#}\oplus \mathscr{A}_1^{\#}\Sigma e_{x_3}\oplus \mathscr{A}_1^{\#}\Sigma e_z, \quad \partial_{F}(\Sigma e_{x_3})=x_3, \quad \partial_{F}(\Sigma e_{z})=x_1+x_3\Sigma e_{x_3};$$ and $f$ is defined by $f|_{\mathscr{A}_1}=\varepsilon$, $f(\Sigma e_{x_3})=0$ and $f(\Sigma e_z)=0$.

We should prove that $f$ is a quasi-isomorphism. It suffices to show $H(F)=\k$.
For any graded cocycle element $a_z\Sigma e_z + a_{x_3}\Sigma e_{x_3}+a\in Z^{2k}(F)$, we have
\begin{align*}
0&=\partial_F(a_z\Sigma e_z + a_{x_3}\Sigma e_{x_3}+a)\\
&=\partial_{\mathscr{A}_1}(a_z)\Sigma e_z +a_z(x_1+x_3\Sigma e_{x_3})+\partial_{\mathscr{A}_1}(a_{x_3})\Sigma e_{x_3}+a_{x_3}x_3+\partial_{\mathscr{A}_1}(a)\\
&=\partial_{\mathscr{A}_1}(a_z)\Sigma e_z +[a_zx_3+\partial_{\mathscr{A}_1}(a_{x_3})]\Sigma e_{x_3}+a_zx_1+a_{x_3}x_3+\partial_{\mathscr{A}_1}(a).
\end{align*}
This implies that
$$
\begin{cases}
\partial_{\mathscr{A}_1}(a_z)=0 \\
a_zx_3+\partial_{\mathscr{A}_1}(a_{x_3})=0 \\
a_zx_1+a_{x_3}x_3+\partial_{\mathscr{A}_1}(a)=0.
\end{cases}
$$ Since $$Z^{2k}(\mathscr{A}_1)=H^{2k}(\mathscr{A}_1)\oplus B^{2k}(\mathscr{A}_1)$$ and $$H(\mathscr{A}_1)=\k[\lceil x_3\rceil, \lceil x_1x_3+x_3x_1\rceil ]/(\lceil x_3\rceil^2),$$
we have $a_z=\partial_{\mathscr{A}_1}(c)+t(x_1x_3+x_3x_1)^k$ for some $c\in \mathscr{A}_1^{2k-1}, t\in \k$. Then
$$[\partial_{\mathscr{A}_1}(c)+t(x_1x_3+x_3x_1)^k]x_3+\partial_{\mathscr{A}_1}(a_{x_3})=0. $$ Hence $t=0$ and $\partial_{\mathscr{A}_1}(c)x_3+\partial_{\mathscr{A}_1}(a_{x_3})=0$. Then $$a_{x_3}=-cx_3+\partial_{\mathscr{A}_1}(\mu)+s(x_1x_3+x_3x_1)^k$$
for some $\mu\in \mathscr{A}_1^{2k-1}$ and $s\in \k$.
We have
$$\partial_{\mathscr{A}_1}(c)x_1+[-cx_3+\partial_{\mathscr{A}_1}(\mu)+s(x_1x_3+x_3x_1)^k]x_3+\partial_{\mathscr{A}_1}(a)=0,$$
which implies that $\partial_{\mathscr{A}_1}(c)=0, s=0$,
and $\partial_{\mathscr{A}_1}(a)=cx_3^2-\partial_{\mathscr{A}_1}(\mu)x_3$. Then $a_z=0$, $a_{x_3}=-cx_3+\partial_{\mathscr{A}_1}(\mu)$ and $$a=-cx_1-\mu x_3+\partial_{\mathscr{A}_1}(\lambda)+\tau(x_1x_3+x_3x_1)^k$$
 for some $\lambda\in \mathscr{A}_1^{2k-1}, \tau\in \k$. Since $c\in Z^{2k-1}(\mathscr{A}_1)\cong H^{2k-1}(\mathscr{A}_1)\oplus B^{2k-1}(\mathscr{A}_1)$,
 we may let
 $c=\partial_{\mathscr{A}_1}(\chi)+\omega(x_1x_3+x_3x_1)^{k-1}x_3$, for some $\chi\in \mathscr{A}_1^{2k-2}$ and $\omega \in \k$.
Therefore,
\begin{align*}
& a_z\Sigma e_z + a_{x_3}\Sigma e_{x_3}+a\\
=&[-cx_3+\partial_{\mathscr{A}_1}(\mu)]\Sigma e_{x_3}-cx_1-\mu x_3+\partial_{\mathscr{A}_1}(\lambda)+\tau(x_1x_3+x_3x_1)^k\\
=&\{-[\partial_{\mathscr{A}_1}(\chi)+\omega(x_1x_3+x_3x_1)^{k-1}x_3]x_3+\partial_{\mathscr{A}_1}(\mu)\}\Sigma e_{x_3} \\
 & -[\partial_{\mathscr{A}_1}(\chi)+\omega(x_1x_3+x_3x_1)^{k-1}x_3]x_1-\mu x_3+\partial_{\mathscr{A}_1}(\lambda)+\tau(x_1x_3+x_3x_1)^k               \\
 =&\partial_F[(\omega-\tau)(x_1x_3+x_3x_1)^{k-1}x_3\Sigma e_z ]\\
 &+ \partial_F\{[-\chi x_3+\mu-\tau(x_1x_3+x_3x_1)^{k-1}x_1]\Sigma e_{x_3}-\chi x_1+\lambda\}.
\end{align*}
Hence $H^{2k}(F)=0$, for any $k\in \Bbb{N}$. It remains to show
 $H^{2k-1}(F)=0$, for any $k\in \Bbb{N}$.  Let $a_z\Sigma e_z + a_{x_3}\Sigma e_{x_3}+a\in Z^{2k-1}(F)$, we have
\begin{align*}
0&=\partial_F(a_z\Sigma e_z + a_{x_3}\Sigma e_{x_3}+a)\\
&=\partial_{\mathscr{A}_1}(a_z)\Sigma e_z -a_z(x_1+x_3\Sigma e_{x_3})+\partial_{\mathscr{A}_1}(a_{x_3})\Sigma e_{x_3}-a_{x_3}x_3+\partial_{\mathscr{A}_1}(a)\\
&=\partial_{\mathscr{A}_1}(a_z)\Sigma e_z +[-a_zx_3+\partial_{\mathscr{A}_1}(a_{x_3})]\Sigma e_{x_3}-a_zx_1-a_{x_3}x_3+\partial_{\mathscr{A}_1}(a).
\end{align*}
This implies that
$$
\begin{cases}
\partial_{\mathscr{A}_1}(a_z)=0 \\
-a_zx_3+\partial_{\mathscr{A}_1}(a_{x_3})=0 \\
-a_zx_1-a_{x_3}x_3+\partial_{\mathscr{A}_1}(a)=0.
\end{cases}
$$ Since $$Z^{2k-1}(\mathscr{A}_1)=H^{2k-1}(\mathscr{A}_1)\oplus B^{2k-1}(\mathscr{A}_1)$$ and $$H(\mathscr{A}_1)=\k[\lceil x_3\rceil, \lceil x_1x_3+x_3x_1\rceil ]/(\lceil x_3\rceil^2),$$
we have $a_z=\partial_{\mathscr{A}_1}(c)+t(x_1x_3+x_3x_1)^{k-1}x_3$ for some $c\in \mathscr{A}_1^{2k-2}, t\in \k$. Then
$$-[\partial_{\mathscr{A}_1}(c)+t(x_1x_3+x_3x_1)^{k-1}x_3]x_3+\partial_{\mathscr{A}_1}(a_{x_3})=0. $$ Then
$$a_{x_3}=cx_3+t(x_1x_3+x_3x_1)^{k-1}x_1+\partial_{\mathscr{A}_1}(\mu)+s(x_1x_3+x_3x_1)^{k-1}x_3$$
for some $\mu\in \mathscr{A}_1^{2k-2}$ and $s\in \k$.
We have
\begin{align*}
0=&-[cx_3+t(x_1x_3+x_3x_1)^{k-1}x_1+\partial_{\mathscr{A}_1}(\mu)+s(x_1x_3+x_3x_1)^{k-1}x_3]x_3\\
&-[\partial_{\mathscr{A}_1}(c)+t(x_1x_3+x_3x_1)^{k-1}x_3]x_1+\partial_{\mathscr{A}_1}(a),
\end{align*}
which implies that $t=0$, $a_z=\partial_{\mathscr{A}_1}(c)$, $a_{x_3}=cx_3+\partial_{\mathscr{A}_1}(\mu)+s(x_1x_3+x_3x_1)^{k-1}x_3$ and $a=cx_1+\mu x_3+s(x_1x_3+x_3x_1)^{k-1}x_1+\partial_{\mathscr{A}_1}(\lambda)+\tau(x_1x_3+x_3x_1)^{k-1}x_3$,
 for some $\lambda\in \mathscr{A}_1^{2k-2}, \tau\in \k$. Hence
\begin{align*}
& a_z\Sigma e_z + a_{x_3}\Sigma e_{x_3}+a\\
=&\partial_{\mathscr{A}_1}(c)\Sigma e_{z} +[cx_3+\partial_{\mathscr{A}_1}(\mu)+s(x_1x_3+x_3x_1)^{k-1}x_3] \Sigma e_{x_3} \\
&+cx_1+\mu x_3+s(x_1x_3+x_3x_1)^{k-1}x_1+\partial_{\mathscr{A}_1}(\lambda)+\tau(x_1x_3+x_3x_1)^{k-1}x_3 \\
 =&\partial_F\{[c+s(x_1x_3+x_3x_1)^{k-1}]\Sigma e_z +[\mu+\tau(x_1x_3+x_3x_1)^{k-1}] \Sigma e_{x_3}+\lambda\}.
\end{align*}
Hence $H^{2k-1}(F)=0$, for any $k\in \Bbb{N}$. Therefore, $f$ is a quasi-isomorphism.

Since $F$ has a semi-basis $\{1, \Sigma e_{x_3},\Sigma e_z\}$ concentrated in degree $0$,  $\mathscr{A}_1$ is a Koszul homologically smooth DG algebra.
By the minimality of $F$, we have $$H(\Hom_{\mathscr{A}_1}(F,\k))=\Hom_{\mathscr{A}_1}(F,\k)= \k\cdot 1^*\oplus \k\cdot(\Sigma e_{x_3})^*\oplus \k \cdot(\Sigma e_z)^*.$$  So the Ext-algebra $E=H(\Hom_{\mathscr{A}_1}(F,F))$  is concentrated in degree $0$.
On the other hand, $$\Hom_{\mathscr{A}_1}(F,F)^{\#}\cong (\k \cdot 1^*\oplus \k \cdot (\Sigma e_{x_3})^*\oplus \k \cdot (\Sigma e_z)^*)\otimes_{\k} F^{\#}$$ is concentrated in degree $\ge 0$. This implies that $E= Z^0(\Hom_{\mathscr{A}_1}(F,F))$.
Since $F^{\#}$ is a free graded $\mathscr{A}_1^{\#}$-module with a basis $\{1,\Sigma e_{x_3},\Sigma e_z\}$ concentrated in degree $0$,
  the elements in  $\Hom_{\mathscr{A}_1}(F,F)^0$ are in one to one correspondence with the matrices in $M_3(\k)$. Indeed, any $f\in \Hom_{\mathscr{A}_1}(F_{\k},F_{\k})^0$ is uniquely determined by
  a matrix $A_f=(a_{ij})_{3\times 3}\in M_3(\k)$ with
$$\left(
                         \begin{array}{c}
                          f(1) \\
                          f(\Sigma e_{x_3})\\
                          f(\Sigma e_z)\\
                         \end{array}
                       \right) =      A_f \cdot \left(
                         \begin{array}{c}
                          1 \\
                          \Sigma e_{x_3}\\
                          \Sigma e_z\\
                         \end{array}
                       \right).  $$
                       We see $f\in  Z^0(\Hom_{\mathscr{A}_1}(F,F)$ if and only if $\partial_{F}\circ f=f\circ \partial_{F}$, if and only if
 $$ A_f\cdot \left(
                         \begin{array}{ccc}
                           0 & 0& 0 \\
                           x_3 & 0 & 0 \\
                           x_1 & x_3 & 0 \\
                         \end{array}
                       \right) =  \left(
                         \begin{array}{cccc}
                           0 & 0& 0 \\
                           x_3 & 0 & 0 \\
                           x_1 & x_3 & 0 \\
                         \end{array}
                       \right) \cdot A_f, $$  which is also equivalent to
                       $$\begin{cases}
                       a_{12}=a_{13}=a_{23}=0\\
                       a_{11}=a_{22}=a_{33}\\
                       a_{21}=a_{32}
                       \end{cases}$$
by a direct computation. Let $$  \mathcal{E}=\left\{ \left(
                         \begin{array}{ccc}
                           a & 0& 0\\
                           b & a & 0 \\
                           c & b & a \\
                         \end{array}
                       \right)\quad | \quad a,b,c,\in \k \right\}$$ be the subalgebra of the matrix algebra. Then one sees $E\cong \mathcal{E}$.
                       Set \begin{align*} e_1= \left(
                         \begin{array}{ccc}
                           1 & 0& 0\\
                           0 & 1 & 0 \\
                           0 & 0 & 1 \\
                         \end{array}
                       \right),& e_2= \left(
                         \begin{array}{ccc}
                           0 & 0& 0\\
                           1 & 0 & 0 \\
                           0 & 1 & 0 \\
                         \end{array}
                       \right),
                        e_3= \left(
                         \begin{array}{ccc}
                           0 & 0& 0\\
                           0 & 0 & 0 \\
                           1 & 0 & 0 \\
                         \end{array}
                       \right).
                       \end{align*}
                     Then $\{e_1,e_2,e_3\}$ is a $\k$-linear bases of the $\k$-algebra
                        $\mathcal{E}$. The multiplication on $\mathcal{E}$ is defined by the following relations
                       $$\begin{cases} e_1\cdot e_i=e_i\cdot e_1=e_i, i=1,2,3 \\
                        e_2^2=e_3, e_2\cdot e_3=e_3\cdot e_2=0
                       \end{cases} .$$
                       So $\mathcal{E}$ is a local commutative $\k$-algebra isomorphic to $\k[X]/(X^3)$.  Hence
$E\cong \k[X]/(X^3)$ is a symmetric Frobenius algebra concentrated
in degree $0$. This implies that
$\mathrm{Tor}_{\mathscr{A}_1}^0(\k_{\mathscr{A}_1},{}_{\mathscr{A}_1}\k)\cong
E^*$ is a symmetric coalgebra. By \cite[Theorem 4.2]{HM},
$\mathscr{A}_1$ is a Koszul Calabi-Yau DG algebra. Since $\{e_1,e_2,e_3\}$ is a $\k$-linear basis of $\mathcal{E}$, any $\k$-linear map $\sigma: \mathcal{E}\to \mathcal{E}$ uniquely corresponds to a matrix in $A_{\sigma}=(a_{ij})_{3\times 3}\in M_3(\k)$ with
$$ \left(
     \begin{array}{c}
       \sigma(e_1) \\
       \sigma(e_2) \\
       \sigma(e_3) \\
     \end{array}
   \right)=A_{\sigma}\left(
                       \begin{array}{c}
                         e_1 \\
                         e_2 \\
                         e_3 \\
                       \end{array}
                     \right).$$ Such $\sigma\in \mathrm{Aut}_{\k}(\mathcal{E})$ if and only if
                     $$A_{\sigma}\in \mathrm{GL}_3(\k) \quad \text{and}\quad \sigma(e_i\cdot e_j)=\sigma(e_i)\sigma(e_j), \, \text{for any}\, \, i,j =1,2,3.$$
Therefore, $\sigma\in \mathrm{Aut}_{\k}(\mathcal{E})$ if and only if
$$ \begin{cases}|(a_{ij})_{3\times 3}|\neq 0, \sigma(e_1)=e_1\\
[\sigma(e_2)]^2=\sigma(e_3),[\sigma(e_3)]^2=0\\
\sigma(e_2)\cdot \sigma(e_3)=\sigma(e_3)\cdot \sigma(e_2)=0
\end{cases} \quad \Leftrightarrow \quad \begin{cases} c_{22}\neq 0, c_{11}=1,c_{12}=c_{13}=0\\
c_{21}=c_{31}=c_{32}=0\\
c_{33}=c_{22}^2.
\end{cases}$$
Hence $$\mathrm{Aut}_{\k}\mathcal{E}\cong \left\{\left(
                                            \begin{array}{ccc}
                                              1 & 0 & 0 \\
                                              0 & a & b \\
                                              0 & 0 & a^2 \\
                                            \end{array}
                                          \right)\quad| \quad a\in \k^{\times},b\in \k\right\}.$$
Since $\mathcal{E}$ is commutative, we have $\mathrm{Aut}_{\k}(\mathcal{E})\cong \mathrm{Out}_{\k}(\mathcal{E})$. By \cite[Remark 4.4]{MYH}, we have
$\mathrm{Pic}_{\k}(\mathcal{E})\cong \mathrm{Out}_{\k}(\mathcal{E})$ and $\mathrm{DPic}_{\k}(\mathcal{E})=\mathbb{Z}\times \mathrm{Pic}_{\k}(\mathcal{E})$. Thus \begin{align*}
\mathrm{DPic}(\mathscr{A})\cong \mathrm{DPic}_{\k}(E) &\cong \mathrm{DPic}_{\k}(\mathcal{E})\cong \mathbb{Z}\times \left\{\left(
                                            \begin{array}{ccc}
                                              1 & 0 & 0 \\
                                              0 & a & b \\
                                              0 & 0 & a^2 \\
                                            \end{array}
                                          \right)\quad| \quad a\in \k^{\times},b\in \k\right\} \\
&\cong \mathbb{Z}\times \left\{\left(
                                            \begin{array}{cc}
                                               a & b \\
                                               0 & a^2 \\
                                            \end{array}
                                          \right)\quad| \quad a\in \k^{\times},b\in \k\right\}
\end{align*}
by \cite[Theorem 4.3]{MYH}.

\end{proof}

\section{$H(\mathscr{A}_2)$ and $\mathrm{DPic}(\mathscr{A}_2)$ }\label{Atwo}
In this section, we will compute $H(\mathscr{A}_2)$, $\mathrm{DPic}(\mathscr{A}_2)$ and study the homological properties of $\mathscr{A}_2$.
The reader will see the arrangement of this section is similar to Section \ref{Aone}. Comparatively speaking, the computations for $\mathscr{A}_2$ is a little more complicated.
\begin{prop}\label{cohatwo}The cohomology ring $H(\mathscr{A}_2)$ of $\mathscr{A}_2$ is
$$H(\mathscr{A}_2)=\frac{\k[\lceil y_3\rceil,\lceil y_1^2+y_2y_3+y_3y_2\rceil]}{(\lceil y_3\rceil^2 )}.$$
\end{prop}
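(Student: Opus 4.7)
The plan is to mirror the proof of Proposition~\ref{cohaone} and compute $H(\mathscr{A}_2)$ by induction on cohomology degree, then extract the multiplicative structure. First, I would dispatch the base cases in low degrees. $H^0(\mathscr{A}_2) = \k$ is immediate and $H^1(\mathscr{A}_2) = \k\lceil y_3\rceil$ because $y_3$ is the only generator with vanishing differential. For $H^2$, I would write a generic element of $\mathscr{A}_2^2$ as $\sum_{i,j=1}^{3} c_{ij} y_i y_j$, apply $\partial_{\mathscr{A}_2}$, collect coefficients of the basis monomials in the free algebra $\k\langle y_1, y_2, y_3\rangle$ to pin down $Z^2(\mathscr{A}_2) = \k(y_1^2+y_2y_3+y_3y_2) \oplus \k(y_1y_3+y_3y_1) \oplus \k y_3^2$, and quotient by $B^2(\mathscr{A}_2) = \k y_3^2 \oplus \k(y_1 y_3 + y_3 y_1)$ to obtain $H^2(\mathscr{A}_2) = \k\lceil y_1^2+y_2y_3+y_3y_2\rceil$.

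Next, I would set up an induction on $n \geq 1$ proving
$$H^{2n-1}(\mathscr{A}_2) = \k\lceil y_3(y_1^2+y_2y_3+y_3y_2)^{n-1}\rceil \quad\text{and}\quad H^{2n}(\mathscr{A}_2) = \k\lceil (y_1^2+y_2y_3+y_3y_2)^n\rceil.$$
Given a generic cocycle $z = y_1 a_1 + y_2 a_2 + y_3 a_3$ of odd degree $2n+1$, matching coefficients of $y_1, y_2, y_3$ on the left in the cocycle condition yields the coupled system
$$\partial_{\mathscr{A}_2}(a_2) = 0, \quad \partial_{\mathscr{A}_2}(a_1) = y_3 a_2, \quad \partial_{\mathscr{A}_2}(a_3) = y_3 a_1 + y_1 a_2.$$
The induction hypothesis at degree $2n$ lets us write $a_2 = \alpha(y_1^2+y_2y_3+y_3y_2)^n + \partial_{\mathscr{A}_2}(\beta)$ for some $\alpha\in\k$ and $\beta\in\mathscr{A}_2^{2n-1}$. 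Substituting into the equations for $a_1$ and $a_3$ and exploiting the Leibniz identities $y_3\partial_{\mathscr{A}_2}(\beta) = -\partial_{\mathscr{A}_2}(y_3\beta)$ and $y_1\partial_{\mathscr{A}_2}(\beta) = y_3^2\beta - \partial_{\mathscr{A}_2}(y_1\beta)$, the correction terms can be absorbed into explicit coboundaries, reducing $z$ modulo $B^{2n+1}(\mathscr{A}_2)$ to a scalar multiple of the claimed generator $y_3(y_1^2+y_2y_3+y_3y_2)^n$. A parallel computation for an even-degree cocycle $z = y_1 b_1 + y_2 b_2 + y_3 b_3 \in Z^{2n+2}(\mathscr{A}_2)$, now invoking the odd-degree induction hypothesis for $b_2$, handles the even case.

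Finally, I would read off the ring structure from two identities: $\lceil y_3\rceil^2 = \lceil\partial_{\mathscr{A}_2}(y_1)\rceil = 0$, and the explicit coboundary
$$\partial_{\mathscr{A}_2}(y_1 y_2 + y_2 y_1) = y_3(y_1^2+y_2y_3+y_3y_2) - (y_1^2+y_2y_3+y_3y_2) y_3,$$
which yields commutativity of the two generators in $H(\mathscr{A}_2)$. Combined with the additive computation, these identify $H(\mathscr{A}_2)$ with $\k[\lceil y_3\rceil,\lceil y_1^2+y_2y_3+y_3y_2\rceil]/(\lceil y_3\rceil^2)$. The main obstacle is the inductive step: unlike Proposition~\ref{cohaone}, where the equation $\partial_{\mathscr{A}_1}(a_2) = x_2 a_2$ forced a clean factorization $a_2 = x_2\delta$, here $a_2$ is merely a cocycle and the coupled equation $\partial_{\mathscr{A}_2}(a_3) = y_3 a_1 + y_1 a_2$ entangles the $y_1$ and $y_3$ contributions, so more careful bookkeeping of coboundaries (via the two Leibniz identities above) is required to close the induction.
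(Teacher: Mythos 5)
Your proposal follows the paper's proof essentially verbatim: the same degree-by-degree induction with an odd-degree cocycle written as $y_1a_1+y_2a_2+y_3a_3$, the same coupled system $\partial_{\mathscr{A}_2}(a_2)=0$, $\partial_{\mathscr{A}_2}(a_1)=y_3a_2$, $\partial_{\mathscr{A}_2}(a_3)=y_3a_1+y_1a_2$ resolved by absorbing correction terms into coboundaries, and the same closing identities $y_3^2=\partial_{\mathscr{A}_2}(y_1)$ and $\partial_{\mathscr{A}_2}(y_1y_2+y_2y_1)=y_3(y_1^2+y_2y_3+y_3y_2)-(y_1^2+y_2y_3+y_3y_2)y_3$ for the ring structure. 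The approach is correct and matches the paper's.
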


\begin{proof}
Since $\partial_{\mathscr{A}_2}$ is defined by $\partial_{\mathscr{A}_2}(y_1)=y_3^2, \partial_{\mathscr{A}_2}(y_2)=y_1y_3+y_3y_1,\partial_{\mathscr{A}_2}(y_3)=0$,  one sees easily that $H^0(\mathscr{A}_2)=\k$ and $H^1(\mathscr{A}_2)=\k\lceil y_3\rceil$. Note that $$\mathscr{A}_2^2=\bigoplus\limits_{i=1}^3\bigoplus\limits_{j=1}^3\k y_iy_j.$$  Each cocycle element in $\mathscr{A}_2^2$ can be written as $\sum\limits_{i=1}^3\sum\limits_{j=1}^3c_{ij}y_iy_j$ for some $c_{ij}\in \k$ and  $i,j\in \{1,2,3\}$. We have
\begin{align*}
0&=\partial_{\mathscr{A}_2}(\sum\limits_{i=1}^3\sum\limits_{j=1}^3c_{ij}y_iy_j)=c_{11}y_3^2y_1-c_{11}y_1y_3^2+c_{12}y_3^2y_2-c_{12}y_1(y_1y_3+y_3y_1)\\
&\quad\quad +(c_{13}-c_{31})y_3^3+c_{21}(y_1y_3+y_3y_1)y_1-c_{21}y_2y_3^2 + c_{22}(y_1y_3+y_3y_1)y_2\\
&\quad\quad -c_{22}y_2(y_1y_3+y_3y_1)+c_{23}(y_1y_3+y_3y_1)y_3-c_{32}y_3(y_1y_3+y_3y_1)\\
&=(c_{11}-c_{32})y_3^2y_1+(c_{23}-c_{11})y_1y_3^2+c_{12}y_3^2y_2-c_{12}y_1^2y_3+(c_{21}-c_{12})y_1y_3y_1\\
&\quad\quad +(c_{13}-c_{31})y_3^2+(c_{23}-c_{32})y_3y_1y_3 +c_{21}y_3y_1^2-c_{21}y_2y_3^2+c_{22}y_1y_3y_2+c_{22}y_3y_1y_2\\
&\quad\quad -c_{22}y_2y_1y_3-c_{22}y_2y_3y_1
\end{align*}
We have $c_{11}=c_{32}=c_{23}$,  $c_{12}=c_{21}=c_{22}=0$ and $c_{13}=c_{31}$. Hence
$$\mathrm{ker}(\partial_{\mathscr{A}_1}^2)= \k(y_1^2+y_2y_3+y_3y_2)\oplus \k(y_1y_3+y_3y_1)\oplus \k y_3^2.$$ Since $\mathrm{im}(\partial_{\mathscr{A}_1}^1)=\k (y_1y_3+y_3y_1)\oplus \k y_3^2$, we have $H^2(\mathscr{A}_1)=\k \lceil y_1^2+ y_2y_3+y_3y_2\rceil$.
Assume that we have proved $H^{2i-1}(\mathscr{A}_2)=\k \lceil y_3(y_1^2+y_2y_3+y_3y_2)^{i-1}\rceil$ and $H^{2i}(\mathscr{A}_2)=\k \lceil (y_1^2+y_2y_3+y_3y_2)^i\rceil$, $i\ge 1$. We should show that \begin{align*}
H^{2i+1}(\mathscr{A}_2)&=\k \lceil y_3(y_1^2+y_2y_3+y_3y_2)^i\rceil  \\
 \text{and}\,\, H^{2i+2}(\mathscr{A}_2)&=\k \lceil (y_1^2+y_2y_3+y_3y_2)^{i+1}\rceil.
\end{align*}
 For any cocycle element in $\mathscr{A}_2^{2i+1}$, we may write it as $y_1a_1+y_2a_2+y_3a_3$, $a_1,a_2,a_3\in \mathscr{A}_2^{2i}$.
 We have \begin{align*}
 0=&\partial_{\mathscr{A}_2}(y_1a_1+y_2a_2+y_3a_3)\\
 =&y_3^2a_1-y_1\partial_{\mathscr{A}_2}(a_1)+(y_1y_3+y_3y_1)a_2-y_2\partial_{\mathscr{A}_2}(a_2)-y_3\partial_{\mathscr{A}_2}(a_3) \\
 =&y_3[y_3a_1+y_1a_2-\partial_{\mathscr{A}_2}(a_3)]-y_2\partial_{\mathscr{A}_2}(a_2)+y_1[y_3a_2-\partial_{\mathscr{A}_2}(a_1)].
 \end{align*}
 Then $\partial_{\mathscr{A}_2}(a_3)=y_3a_1+y_1a_2$, $\partial_{\mathscr{A}_2}(a_2)=0$ and $\partial_{\mathscr{A}_2}(a_1)=y_3a_2$. Since $Z^{2i}(\mathscr{A}_1)\cong H^{2i}(\mathscr{A}_1)\oplus B^{2i}(\mathscr{A}_1)$, we may let $a_2=l(y_1^2+y_2y_3+y_3y_2)^i+\partial_{\mathscr{A}_2}(\chi)$ for some $l\in \k$ and $\chi\in \mathscr{A}_2^{2i-1}$.
 We have $\partial_{\mathscr{A}_2}(a_1)=ly_3(y_1^2+y_2y_3+y_3y_2)^i+y_3\partial_{\mathscr{A}_2}(\chi)$.  Then $l=0$ and $a_1=-y_3\chi+\theta(y_1^2+y_2y_3+y_3y_2)^i+\partial_{\mathscr{A}_2}(\omega)$ for some $\theta\in \k$ and $\omega\in \mathscr{A}_2^{2i-1}$.
 Since $$\partial_{\mathscr{A}_2}(a_3)=y_3a_1+y_1a_2=y_3[-y_3\chi+\theta(y_1^2+y_2y_3+y_3y_2)^i+\partial_{\mathscr{A}_2}(\omega)]+y_1\partial_{\mathscr{A}_2}(\chi),$$ we have $\theta=0$ and $a_3=-y_1\chi -y_3\omega + t(y_1^2+y_2y_3+y_3y_2)^i+\partial_{\mathscr{A}_2}(\eta)$ for some $t\in \k$ and $\eta\in \mathscr{A}_2^{2i-1}$. Then
 \begin{align*}
  \lceil y_1a_1+y_2a_2 +y_3a_3\rceil & =\lceil \partial_{\mathscr{A}_2}(-y_2\chi-y_1\omega-y_3\eta)+ty_3(y_1^2+y_2y_3+y_3y_2)^i\rceil \\
 &=\lceil ty_3(y_1^2+y_2y_3+y_3y_2)^i \rceil.
 \end{align*}
 So $H^{2i+1}(\mathscr{A}_2)=\k\lceil y_3(y_1^2+y_2y_3+y_3y_2)^i \rceil$.

 Now, let $y_1b_1+y_2b_2+y_3b_3$ be an arbitrary cocycle element in $\mathscr{A}_2^{2i+2}$. Then we have
   \begin{align*}
 0=&\partial_{\mathscr{A}_2}(y_1b_1+y_2b_2+y_3b_3)\\
 =&y_3^2b_1-y_1\partial_{\mathscr{A}_1}(b_1)+(y_1y_3+y_3y_1)b_2-y_2\partial_{\mathscr{A}_2}(b_2)-y_3\partial_{\mathscr{A}_2}(b_3) \\
 =&y_3[y_3b_1+y_1b_2-\partial_{\mathscr{A}_2}(b_3)]+y_1[y_3b_2-\partial_{\mathscr{A}_1}(b_1)]-y_2\partial_{\mathscr{A}_2}(b_2).
 \end{align*}
 So $\partial_{\mathscr{A}_2}(b_2)=0$, $\partial_{\mathscr{A}_2}(b_1)=y_3b_2$ and $y_3b_1+y_1b_2=\partial_{\mathscr{A}_2}(b_3)$. Since
 $$Z^{2i+1}(\mathscr{A}_2)\cong H^{2i+1}(\mathscr{A}_2)\oplus B^{2i+1}(\mathscr{A}_2),$$ we may let
 $b_2=ry_3(y_1^2+y_2y_3+y_3y_2)^i+\partial_{\mathscr{A}_2}(\omega)$ for some $r\in \k$ and $\omega\in \mathscr{A}_2^{2i}$.
 Then $$\partial_{\mathscr{A}_2}(b_1)=y_3b_2=ry_3^2(y_1^2+y_2y_3+y_3y_2)^i+y_3\partial_{\mathscr{A}_2}(\omega).$$
 So $b_1=ry_1(y_1^2+y_2y_3+y_3y_2)^i-y_3\omega+sy_3(y_1^2+y_2y_3+y_3y_2)^i+\partial_{\mathscr{A}_2}(\varphi)$ for some $s\in \k$ and $\varphi\in \mathscr{A}_2^{2i}$.
  Since \begin{align*}
  \partial_{\mathscr{A}_2}(b_3)&=y_3b_1+y_1b_2 \\
  &=y_3[ry_1(y_1^2+y_2y_3+y_3y_2)^i-y_3\omega+sy_3(y_1^2+y_2y_3+y_3y_2)^i+\partial_{\mathscr{A}_2}(\varphi)]\\
  &\quad \quad + y_1[ry_3(y_1^2+y_2y_3+y_3y_2)^i+\partial_{\mathscr{A}_2}(\omega)]\\
  &=\partial_{\mathscr{A}_2}[ry_2(y_1^2+y_2y_3+y_3y_2)^i-y_1\omega+sy_1(y_1^2+y_2y_3+y_3y_2)^i-y_3\varphi],
  \end{align*}
 we have \begin{align*}
 b_3=ry_2(y_1^2+y_2y_3+y_3y_2)^i-y_1\omega+sy_1(y_1^2+y_2y_3+y_3y_2)^i-y_3\varphi \\
 +\lambda y_3(y_1^2+y_2y_3+y_3y_2)^i+\partial_{\mathscr{A}_2}(\xi)
 \end{align*} for some $\xi \in \mathscr{A}_2^{2i}$ and $\lambda\in \k$.
 Therefore,
 \begin{align*}
 &\quad\quad y_1b_1+y_2b_2 +y_3b_3\\
 &=y_1[ry_1(y_1^2+y_2y_3+y_3y_2)^i-y_3\omega+sy_3(y_1^2+y_2y_3+y_3y_2)^i+\partial_{\mathscr{A}_2}(\varphi)]\\
 &\quad + y_2[ry_3(y_1^2+y_2y_3+y_3y_2)^i+\partial_{\mathscr{A}_2}(\omega)]+\lambda y_3^2(y_1^2+y_2y_3+y_3y_2)^i+y_3\partial_{\mathscr{A}_2}(\xi)\\
 &\quad + y_3[ry_2(y_1^2+y_2y_3+y_3y_2)^i-y_1\omega+sy_1(y_1^2+y_2y_3+y_3y_2)^i-y_3\varphi] \\
 &=r(y_1^2+y_2y_3+y_3y_2)^{i+1}+\partial_{\mathscr{A}_2}[sy_2(y_1^2+y_2y_3+y_3y_2)^i-y_2\omega-y_1\varphi-y_3\xi]\\
 &\quad + \partial_{\mathscr{A}_2}[\lambda y_1(y_1^2+y_2y_3+y_3y_2)^i].
 \end{align*}
 Thus $H^{2i+2}(\mathscr{A}_2)=\k\lceil (y_1^2+y_2y_3+y_3y_2)^{i+1}   \rceil$. By the induction above, we obtain that
 \begin{align*}
H^{2n-1}(\mathscr{A}_2)&=\k \lceil y_3(y_1^2+y_2y_3+y_3y_2)^{n-1}\rceil  \\
 \text{and}\,\, H^{2n}(\mathscr{A}_1)&=\k \lceil (y_1^2+y_2y_3+y_3y_2)^{n}\rceil, \forall n\ge 1.
\end{align*}
 Since \begin{align*}
 y_3(y_1^2+y_2y_3+y_3y_2)-(y_1^2+y_2y_3+y_3y_2)y_3&=y_3y_1^2-y_1^2y_3+y_3^2y_2-y_2y_3^2\\
 &=\partial_{\mathscr{A}_1}(y_1y_2+y_2y_1),
 \end{align*} we have
 $\lceil y_3\rceil \cdot \lceil y_1^2+y_2y_3+y_3y_2\rceil = \lceil y_1^2+y_2y_3+y_3y_2\rceil\cdot \lceil y_3\rceil$ in $H(\mathscr{A}_2)$. Hence $$H(\mathscr{A}_2)=\frac{\k[\lceil y_3\rceil,\lceil y_1^2+y_2y_3+y_3y_2\rceil]}{(\lceil y_3\rceil^2)}.$$

\end{proof}

By Proposition \ref{cohatwo}, one sees that $H(\mathscr{A}_2)$ is an AS-Gorenstein graded algebra.
Hence $\mathscr{A}_2$ is a Gorenstein DG algebra by \cite[Proposition 1]{Gam}. Furthermore, we have the following proposition on its other homological properties and derived Picard group.
\begin{prop}\label{atwo}
The DG free algebra $\mathscr{A}_2$ is a Kozul Calabi-Yau DG algebra with
$$\mathrm{DPic}(\mathscr{A}_2)\cong \mathbb{Z}\times \left\{\left(
                                                                                       \begin{array}{ccc}
                                                                                         a & b & c \\
                                                                                         0 & a^2 & 2ab \\
                                                                                         0 & 0 & a^3 \\
                                                                                       \end{array}
                                                                                   \right)|\,\, a\in \k^*,b, c\in \k\right\}.$$
\end{prop}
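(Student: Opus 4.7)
The strategy parallels Proposition~\ref{aone}: I will (i)~construct an explicit minimal semi-free resolution $f\colon F\stackrel{\simeq}{\to}{}_{\mathscr{A}_2}\k$ with finite semi-basis concentrated in degree~$0$, which immediately gives that $\mathscr{A}_2$ is Koszul and homologically smooth; (ii)~identify the Ext-algebra $E=H(\mathrm{Hom}_{\mathscr{A}_2}(F,F))$ with $\k[X]/(X^4)$, a symmetric Frobenius algebra, so that \cite[Theorem 4.2]{HM} yields the Calabi-Yau property; and (iii)~read off $\mathrm{DPic}(\mathscr{A}_2)$ from $\mathrm{Aut}_{\k}(E)$ via \cite[Remark 4.4, Theorem 4.3]{MYH}.

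For step~(i), following the procedure of \cite[Proposition 2.4]{MW1}, I propose
$$F^{\#}=\mathscr{A}_2^{\#}\oplus\mathscr{A}_2^{\#}\Sigma e_{y_3}\oplus\mathscr{A}_2^{\#}\Sigma e_{z_1}\oplus\mathscr{A}_2^{\#}\Sigma e_{z_2},$$
with $\partial_F(\Sigma e_{y_3})=y_3$, $\partial_F(\Sigma e_{z_1})=y_1+y_3\Sigma e_{y_3}$, and $\partial_F(\Sigma e_{z_2})=y_2+y_1\Sigma e_{y_3}+y_3\Sigma e_{z_1}$. These right-hand sides are the natural lifts of the syzygies $\partial_{\mathscr{A}_2}(y_1)=y_3^2$ and $\partial_{\mathscr{A}_2}(y_2)=y_1y_3+y_3y_1$: each new generator is introduced to kill the unique cohomology class that survives after the previous step, and a direct Leibniz-rule check confirms that each right-hand side is a cocycle. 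The main technical obstacle will be verifying $H(F)=\k$ by an exhaustive induction on degree, parallel to the computation in Proposition~\ref{aone}: write an arbitrary cocycle in $F^{n}$ in the semi-basis, use Proposition~\ref{cohatwo} to peel off the $\Sigma e_{z_2}$-coefficient (which must be a cocycle in $\mathscr{A}_2$), then the $\Sigma e_{z_1}$- and $\Sigma e_{y_3}$-coefficients successively, and finally exhibit the remaining $\mathscr{A}_2$-component as $\partial_F$ of an explicit element. The bookkeeping is heavier than in Proposition~\ref{aone} because of the extra generator and the two-term differential $\partial_{\mathscr{A}_2}(y_2)=y_1y_3+y_3y_1$, but no new phenomenon appears.

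Once $F$ is established, minimality of $F$ and the fact that its semi-basis sits in degree~$0$ force $E=H(\mathrm{Hom}_{\mathscr{A}_2}(F,F))=Z^0(\mathrm{Hom}_{\mathscr{A}_2}(F,F))$, and a degree-$0$ chain map corresponds bijectively to a matrix $A_f=(a_{ij})\in M_4(\k)$ acting on the semi-basis $\{1,\Sigma e_{y_3},\Sigma e_{z_1},\Sigma e_{z_2}\}$. Writing the differential as the lower-triangular Toeplitz matrix
$$D=\begin{pmatrix}0&0&0&0\\ y_3&0&0&0\\ y_1&y_3&0&0\\ y_2&y_1&y_3&0\end{pmatrix},$$
the cocycle condition $\partial_F\circ f=f\circ\partial_F$ becomes $A_fD=DA_f$; an entry-by-entry comparison, using the $\k$-linear independence of $y_1,y_2,y_3$, forces $A_f$ to be lower-triangular Toeplitz with constant diagonal. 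Hence $E\cong\k[X]/(X^4)$ via $X\mapsto$ the subdiagonal shift; this is a commutative local symmetric Frobenius algebra, so $\mathscr{A}_2$ is a Koszul Calabi-Yau DG algebra by \cite[Theorem 4.2]{HM}.

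Finally, any $\sigma\in\mathrm{Aut}_{\k}(\k[X]/(X^4))$ is determined by $\sigma(X)=aX+bX^2+cX^3$ with $a\in\k^{\times}$ and $b,c\in\k$; expanding $\sigma(X^2)=a^2X^2+2abX^3$ and $\sigma(X^3)=a^3X^3$ modulo~$X^4$ gives the matrix of $\sigma$ in the basis $\{1,X,X^2,X^3\}$ as
$$\begin{pmatrix}1&0&0&0\\ 0&a&b&c\\ 0&0&a^2&2ab\\ 0&0&0&a^3\end{pmatrix}.$$
Since $E$ is commutative, \cite[Remark 4.4]{MYH} gives $\mathrm{Pic}_{\k}(E)\cong\mathrm{Out}_{\k}(E)=\mathrm{Aut}_{\k}(E)$ and $\mathrm{DPic}_{\k}(E)\cong\mathbb{Z}\times\mathrm{Pic}_{\k}(E)$, and derived invariance via \cite[Theorem 4.3]{MYH} transfers this isomorphism to $\mathrm{DPic}(\mathscr{A}_2)$; dropping the trivial $1\oplus(\cdot)$ block yields the stated formula.
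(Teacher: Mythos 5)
Your proposal follows essentially the same route as the paper's proof: the same four-generator minimal semi-free resolution (your $\Sigma e_{z_1},\Sigma e_{z_2}$ are the paper's $\Sigma e_z,\Sigma e_r$ with the identical differentials), the same identification of $E=Z^0(\Hom_{\mathscr{A}_2}(F,F))$ with lower-triangular Toeplitz matrices giving $E\cong\k[X]/(X^4)$, the same appeal to \cite[Theorem 4.2]{HM} for the Calabi--Yau property, and the same computation of $\mathrm{Aut}_{\k}(\k[X]/(X^4))$ combined with \cite[Remark 4.4, Theorem 4.3]{MYH}. The only place you stop short is the degree-by-degree verification that $H(F)=\k$, which you outline rather than execute, but the inductive scheme you describe (peeling off the coefficients of $\Sigma e_{z_2}$, $\Sigma e_{z_1}$, $\Sigma e_{y_3}$ in turn using Proposition~\ref{cohatwo}) is exactly the computation the paper carries out.
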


\begin{proof}
By definition,  $\mathscr{A}_2^{\#}=\k\langle y_1,y_2,y_3\rangle$ and the differential $\partial_{\mathscr{A}_2}$ is defined by $$\partial_{\mathscr{A}_2}(y_1)=y_3^2, \partial_{\mathscr{A}_2}(y_2)=y_1y_3+y_3y_1, \partial_{\mathscr{A}_2}(y_3)=0.$$
According to the constructing procedure of the minimal semi-free resolution in \cite[Proposition 2.4]{MW1}, we get a minimal semi-free resolution $f: F\stackrel{\simeq}{\to} {}_{\mathscr{A}_2}\k$, where $F$ is a semi-free DG $\mathscr{A}_2$-module with $$F^{\#}=\mathscr{A}_2^{\#}\oplus \mathscr{A}_2^{\#}\Sigma e_{y_3}\oplus \mathscr{A}_2^{\#}\Sigma e_z\oplus \mathscr{A}_2^{\#}\Sigma e_{r}, \,\, \partial_{F}(\Sigma e_{y_3})=y_3,$$
$$ \partial_{F}(\Sigma e_{z})=y_1+y_3\Sigma e_{y_3},\quad \partial_{F}(\Sigma e_r)=y_3\Sigma e_z+y_1\Sigma e_{y_3}+y_2,$$ and $f$ is defined by $f|_{\mathscr{A}_1}=\varepsilon$, $f(\Sigma e_{y_3})=0$, $f(\Sigma e_z)=0$ and $f(\Sigma e_r)=0$. We should prove that $f$ is a quasi-isomorphism. It suffices to show $H(F)=\k$.
For any graded cocycle element $a_r\Sigma e_r + a_z\Sigma e_z + a_{y_3}\Sigma e_{y_3} + a\in Z^{2k}(F)$, we have
\begin{align*}
0&=\partial_F(a_r\Sigma e_r + a_z\Sigma e_z + a_{y_3}\Sigma e_{y_3}+a)\\
&=\partial_{\mathscr{A}_2}(a_r)\Sigma e_r+a_r[y_3\Sigma e_z+y_1\Sigma e_{y_3}+y_2]+\partial_{\mathscr{A}_2}(a_z)\Sigma e_z +a_z(y_1+y_3\Sigma e_{y_3})\\
&\quad +\partial_{\mathscr{A}_2}(a_{y_3})\Sigma e_{y_3}+a_{y_3}y_3+\partial_{\mathscr{A}_2}(a)\\
&=\partial_{\mathscr{A}_2}(a_r)\Sigma e_r+[\partial_{\mathscr{A}_2}(a_z)+a_ry_3]\Sigma e_z +[a_ry_1+a_zy_3+\partial_{\mathscr{A}_2}(a_{y_3})]\Sigma e_{y_3}\\
&\quad +a_ry_2+a_zy_1+a_{y_3}y_3+\partial_{\mathscr{A}_2}(a).
\end{align*}
This implies that
$$
\begin{cases}
\partial_{\mathscr{A}_2}(a_r)=0 \\
a_ry_3+\partial_{\mathscr{A}_2}(a_{z})=0 \\
a_ry_1+a_{z}y_3+\partial_{\mathscr{A}_2}(a_{y_3})=0\\
a_ry_2+a_zy_1+a_{y_3}y_3+\partial_{\mathscr{A}_2}(a)=0.
\end{cases}
$$
Since $Z^{2k}(\mathscr{A}_2)=H^{2k}(\mathscr{A}_2)\oplus B^{2k}(\mathscr{A}_2)$ and $H(\mathscr{A}_2)=\frac{\k[\lceil y_3\rceil, \lceil y_1^2+y_2y_3+y_3y_2\rceil ]}{(\lceil y_3\rceil^2)},$
we have $a_r=\partial_{\mathscr{A}_2}(c)+t(y_1^2+y_2y_3+y_3y_2)^k$ for some $c\in \mathscr{A}_2^{2k-1}, t\in \k$. Then
$$[\partial_{\mathscr{A}_2}(c)+t(y_1^2+y_2y_3+y_3y_2)^k]y_3+\partial_{\mathscr{A}_2}(a_{z})=0. $$ Hence $t=0$ and $a_{z}=-cy_3+\partial_{\mathscr{A}_2}(\mu)+s(y_1^2+y_2y_3+y_3y_2)^k$
for some $\mu\in \mathscr{A}_2^{2k-1}$ and $s\in \k$.
We have
$$\partial_{\mathscr{A}_2}(c)y_1+[-cy_3+\partial_{\mathscr{A}_2}(\mu)+s(y_1^2+y_2y_3+y_3y_2)^k]y_3+\partial_{\mathscr{A}_2}(a_{y_3})=0,$$
which implies that $s=0$
and $\partial_{\mathscr{A}_2}(a_{y_3})+\partial_{\mathscr{A}_2}(cy_1+\mu y_3)=0$. Then  $$a_{y_3}=-cy_1-\mu y_3+\partial_{\mathscr{A}_2}(\lambda)+\tau(y_1^2+y_2y_3+y_3y_2)^k$$
 for some $\lambda\in \mathscr{A}_2^{2k-1}, \tau\in \k$.
 Since \begin{align*}
&\quad\quad  \partial_{\mathscr{A}_2}(a)=-a_ry_2-a_zy_1-a_{y_3}y_3\\
 &=-\partial_{\mathscr{A}_2}(c)y_2-[\partial_{\mathscr{A}_2}(\mu)-cy_3]y_1-[\tau(y_1^2+y_2y_3+y_3y_2)^k-cy_1-\mu y_3+\partial_{\mathscr{A}_2}(\lambda)]y_3\\
 &=\partial_{\mathscr{A}_2}[-cy_2-\mu y_1-\lambda y_3]-\tau(y_1^2+y_2y_3+y_3y_2)^ky_3,
 \end{align*}
we conclude that $\tau=0$ and $a=-cy_2-\mu y_1-\lambda y_3+\theta(y_1^2+y_2y_3+y_3y_2)^k+\partial_{\mathscr{A}_2}(\xi)$ for some $\theta\in \k$ and $\xi \in \mathscr{A}_2^{2k-1}$.
Therefore,
\begin{align*}
&a_r\Sigma e_r + a_z\Sigma e_z + a_{y_3}\Sigma e_{y_3}+a\\
=&\partial_{\mathscr{A}_2}(c)\Sigma e_r +[\partial_{\mathscr{A}_2}(\mu)-cy_3]\Sigma e_z+[\partial_{\mathscr{A}_2}(\lambda)-cy_1-\mu y_3]\Sigma e_{y_3}\\
&\quad +\partial_{\mathscr{A}_2}(\xi)-cy_2-\mu y_1-\lambda y_3+\theta (y_1^2+y_2y_3+y_3y_2)^k\\
=&\partial_F[c\Sigma e_r+\mu \Sigma e_z +\lambda \Sigma e_{y_3}+\xi-\theta(y_1^2+y_2y_3+y_3y_2)^{k-1}(y_3\Sigma e_r+y_1\Sigma e_z+y_2\Sigma e_{y_3}) ].
\end{align*}
Hence $H^{2k}(F)=0$, for any $k\in \Bbb{N}$. It remains to show
 $H^{2k-1}(F)=0$, for any $k\in \Bbb{N}$.  Let $b_r\Sigma e_r + b_z\Sigma e_z + b_{y_3}\Sigma e_{y_3}+b\in Z^{2k-1}(F)$, we have
\begin{align*}
0&=\partial_F(b_r\Sigma e_r+b_z\Sigma e_z + b_{x_3}\Sigma e_{x_3}+b)\\
&=\partial_{\mathscr{A}_2}(b_r)\Sigma e_r + [\partial_{\mathscr{A}_2}(b_z)-b_ry_3]\Sigma e_z +[\partial_{\mathscr{A}_2}(b_{y_3})-b_ry_1-b_zy_3]\Sigma e_{y_3}\\
&\quad +\partial_{\mathscr{A}_2}(b)-b_ry_2-b_zy_1-b_{y_3}y_3.
\end{align*}
This implies that
$$
\begin{cases}
\partial_{\mathscr{A}_2}(b_r)=0 \\
\partial_{\mathscr{A}_2}(b_{z})=b_ry_3\\
\partial_{\mathscr{A}_2}(b_{y_3})=b_ry_1+b_zy_3\\
\partial_{\mathscr{A}_2}(b)=b_ry_2+b_zy_1+b_{y_3}y_3.
\end{cases}
$$ Since $Z^{2k-1}(\mathscr{A}_2)=H^{2k-1}(\mathscr{A}_2)\oplus B^{2k-1}(\mathscr{A}_2)$ and $$H(\mathscr{A}_2)=\frac{\k[\lceil y_3\rceil, \lceil y_1^2+y_2y_3+y_3y_2\rceil ]}{(\lceil y_3\rceil^2)},$$
we have $b_r=\partial_{\mathscr{A}_2}(c)+t(y_1^2+y_2y_3+y_3y_2)^{k-1}y_3$ for some $c\in \mathscr{A}_2^{2k-2}, t\in \k$. Then
$$\partial_{\mathscr{A}_2}(b_{z})=[\partial_{\mathscr{A}_2}(c)+t(y_1^2+y_2y_3+y_3y_2)^{k-1}y_3]y_3.$$
Hence $b_z=cy_3+t(y_1^2+y_2y_3+y_3y_2)^{k-1}y_1 +s(y_1^2+y_2y_3+y_3y_2)^{k-1}y_3+\partial_{\mathscr{A}_2}(\mu)$
for some $\mu\in \mathscr{A}_2^{2k-2}$ and $s\in \k$.
Then \begin{align*}
 & \quad\quad\partial_{\mathscr{A}_2}(b_{y_3})=b_ry_1+b_zy_3=[\partial_{\mathscr{A}_2}(c)+t(y_1^2+y_2y_3+y_3y_2)^{k-1}y_3]y_1\\
& +[cy_3+t(y_1^2+y_2y_3+y_3y_2)^{k-1}y_1 +s(y_1^2+y_2y_3+y_3y_2)^{k-1}y_3+\partial_{\mathscr{A}_2}(\mu)]y_3\\
&=\partial_{\mathscr{A}_2}[cy_1+t(y_1^2+y_2y_3+y_3y_2)^{k-1}y_2+s(y_1^2+y_2y_3+y_3y_2)^{k-1}y_1+\mu y_3].
\end{align*}
So $b_{y_3}=cy_1+(y_1^2+y_2y_3+y_3y_2)^{k-1}(ty_2+sy_1)+\mu y_3+\lambda (y_1^2+y_2y_3+y_3y_2)^{k-1}y_3+\partial_{\mathscr{A}_2}(\xi)$ for some $\lambda\in \k$ and $\xi\in \mathscr{A}_2^{2k-2}$. Since
\begin{align*}
&\quad \quad \partial_{\mathscr{A}_2}(b)=b_ry_2+b_zy_1+b_{y_3}y_3=[\partial_{\mathscr{A}_2}(c)+t(y_1^2+y_2y_3+y_3y_2)^{k-1}y_3]y_2\\
&+[cy_3+t(y_1^2+y_2y_3+y_3y_2)^{k-1}y_1 +s(y_1^2+y_2y_3+y_3y_2)^{k-1}y_3+\partial_{\mathscr{A}_2}(\mu)]y_1\\
&+[cy_1+(y_1^2+y_2y_3+y_3y_2)^{k-1}(ty_2+sy_1+\lambda y_3)+\mu y_3+\partial_{\mathscr{A}_2}(\xi)]y_3\\
&= \partial_{\mathscr{A}_2}[cy_2+\mu y_1+\xi y_3+s(y_1^2+y_2y_3+y_3y_2)^{k-1}y_2+\lambda(y_1^2+y_2y_3+y_3y_2)^{k-1}y_1]\\
&\quad +t(y_1^2+y_2y_3+y_3y_2)^{k}
\end{align*}
which implies that $t=0$, and
\begin{align*}
b=&cy_2+\mu y_1+\xi y_3+s(y_1^2+y_2y_3+y_3y_2)^{k-1}y_2+\lambda(y_1^2+y_2y_3+y_3y_2)^{k-1}y_1\\
&\quad +\theta(y_1^2+y_2y_3+y_3y_2)^{k-1}y_3+\partial_{\mathscr{A}_2}(\beta)
\end{align*}
for some $\theta\in \k$ and $\beta\in \mathscr{A}_2^{2k-2}$.
Hence
\begin{align*}
&b_r\Sigma e_r + b_z\Sigma e_z + b_{y_3}\Sigma e_{y_3}+b\\
=&\partial_{\mathscr{A}_2}(c)\Sigma e_{r} +[\partial_{\mathscr{A}_2}(\mu)+cy_3+s(y_1^2+y_2y_3+y_3y_2)^{k-1}y_3] \Sigma e_{z} \\
&+[cy_1+\mu y_3+(y_1^2+y_2y_3+y_3y_2)^{k-1}(sy_1+\lambda y_3)+\partial_{\mathscr{A}_2}(\xi)]\Sigma e_{y_3}  \\
&+ cy_2+\mu y_1+\xi y_3+(y_1^2+y_2y_3+y_3y_2)^{k-1}(sy_2+\lambda y_1+\theta y_3)+\partial_{\mathscr{A}_2}(\beta)                     \\
 =&\partial_F\{c\Sigma e_r+\mu\Sigma e_z+\xi\Sigma e_{y_3}+\beta +(y_1^2+y_2y_3+y_3y_2)^{k-1}(s\Sigma e_r+\lambda \Sigma e_z +\theta \Sigma e_{y_3}) \}
\end{align*}
Hence $H^{2k-1}(F)=0$, for any $k\in \Bbb{N}$. Therefore, $f$ is a quasi-isomorphism.

Since $F$ has a semi-basis $\{1, \Sigma e_{y_3},\Sigma e_z, \Sigma e_r\}$ concentrated in degree $0$,  $\mathscr{A}_2$ is a Koszul homologically smooth DG algebra.
By the minimality of $F$, we have $$H(\Hom_{\mathscr{A}_2}(F,\k))=\Hom_{\mathscr{A}_2}(F,\k)= \k\cdot 1^*\oplus \k\cdot(\Sigma e_{y_3})^*\oplus \k \cdot(\Sigma e_z)^*\oplus \k \cdot (\Sigma e_r)^*.$$  So the Ext-algebra $E=H(\Hom_{\mathscr{A}_2}(F,F))$  is concentrated in degree $0$.
On the other hand, $$\Hom_{\mathscr{A}_2}(F,F)^{\#}\cong [\k \cdot 1^*\oplus \k \cdot (\Sigma e_{y_3})^*\oplus \k \cdot (\Sigma e_z)^*\oplus \k \cdot (\Sigma e_r)^*]\otimes_{\k} F^{\#}$$ is concentrated in degree $\ge 0$. This implies that $E= Z^0(\Hom_{\mathscr{A}_2}(F,F))$.
Since $F^{\#}$ is a free graded $\mathscr{A}_2^{\#}$-module with a basis $\{1,\Sigma e_{y_3},\Sigma e_z, \Sigma e_r\}$ concentrated in degree $0$,
  the elements in  $\Hom_{\mathscr{A}_2}(F,F)^0$ are in one to one correspondence with the matrices in $M_4(\k)$. Indeed, any $f\in \Hom_{\mathscr{A}_2}(F_{\k},F_{\k})^0$ is uniquely determined by
  a matrix $A_f=(a_{ij})_{4\times 4}\in M_4(\k)$ with
$$\left(
                         \begin{array}{c}
                          f(1) \\
                          f(\Sigma e_{y_3})\\
                          f(\Sigma e_z)\\
                          f(\Sigma e_r)
                         \end{array}
                       \right) =      A_f \cdot \left(
                         \begin{array}{c}
                          1 \\
                          \Sigma e_{x_3}\\
                          \Sigma e_z\\
                          \Sigma e_r
                         \end{array}
                       \right).  $$
                       We see $f\in  Z^0(\Hom_{\mathscr{A}_2}(F,F)$ if and only if $\partial_{F}\circ f=f\circ \partial_{F}$, if and only if
 $$ A_f\cdot \left(
                         \begin{array}{cccc}
                           0   & 0 & 0 &0\\
                           y_3 & 0 & 0&0 \\
                           y_1 & y_3 & 0&0 \\
                           y_2 & y_1 &y_3 &0
                         \end{array}
                       \right) = \left(
                         \begin{array}{cccc}
                           0   & 0 & 0 &0\\
                           y_3 & 0 & 0&0 \\
                           y_1 & y_3 & 0&0 \\
                           y_2 & y_1 &y_3 &0
                         \end{array}
                       \right) \cdot A_f, $$  which is also equivalent to
                       $$\begin{cases}
                       a_{12}=a_{13}=a_{14}=a_{23}=a_{24}=a_{34}=0\\
                       a_{11}=a_{22}=a_{33}=a_{44}\\
                       a_{21}=a_{32}=a_{43}\\
                        a_{42}=a_{31}
                       \end{cases}$$
by a direct computation. Let $$  \mathcal{E}=\left\{ \left(
                         \begin{array}{cccc}
                           a & 0 & 0 & 0\\
                           b & a & 0 & 0 \\
                           c & b & a & 0 \\
                           d & c & b & a
                         \end{array}
                       \right)\quad | \quad a,b,c, d\in \k \right\}$$ be the subalgebra of the matrix algebra. Then one sees $E\cong \mathcal{E}$.
                       Set \begin{align*} e_1= \left(
                         \begin{array}{cccc}
                           1 & 0 & 0 & 0\\
                           0 & 1 & 0 & 0 \\
                           0 & 0 & 1 & 0\\
                           0 & 0 & 0 & 1
                         \end{array}
                       \right), \quad & e_2= \left(
                         \begin{array}{cccc}
                           0 & 0& 0 & 0\\
                           1 & 0 & 0 & 0 \\
                           0 & 1 & 0 & 0 \\
                           0 & 0 & 1 & 0
                         \end{array}
                       \right), \\
                        e_3= \left(
                         \begin{array}{cccc}
                           0 & 0& 0 & 0\\
                           0 & 0 & 0 & 0 \\
                           1 & 0 & 0 & 0 \\
                           0 & 1 & 0 & 0
                         \end{array}
                       \right), \quad & e_4= \left(
                         \begin{array}{cccc}
                           0 & 0& 0 & 0\\
                           0 & 0 & 0 & 0 \\
                           0 & 0 & 0 & 0 \\
                           1 & 0 & 0 & 0
                         \end{array}
                       \right).
                       \end{align*}
                     Then $\{e_1,e_2,e_3, e_4\}$ is a $\k$-linear bases of the $\k$-algebra
                        $\mathcal{E}$. The multiplication on $\mathcal{E}$ is defined by the following relations
                       $$\begin{cases} e_1\cdot e_i=e_i\cdot e_1=e_i, i=1,2,3,4 \\
                        e_2^2=e_3, e_2\cdot e_3=e_3\cdot e_2=e_4,\\
                        e_2e_4=e_4e_2=0, e_3^2=0=e_4^2, e_3e_4=e_4e_3=0
                       \end{cases} .$$
                       So $\mathcal{E}$ is a local commutative $\k$-algebra isomorphic to $\k[X]/(X^4)$.  Hence
$E\cong \k[X]/(X^4)$ is a symmetric Frobenius algebra concentrated
in degree $0$. This implies that
$\mathrm{Tor}_{\mathscr{A}_2}^0(\k_{\mathscr{A}_2},{}_{\mathscr{A}_2}\k)\cong
E^*$ is a symmetric coalgebra. By \cite[Theorem 4.2]{HM},
$\mathscr{A}_2$ is a Koszul Calabi-Yau DG algebra. Since $\{e_1,e_2,e_3,e_4\}$ is a $\k$-linear basis of $\mathcal{E}$, any $\k$-linear map $\sigma: \mathcal{E}\to \mathcal{E}$ uniquely corresponds to a matrix in $A_{\sigma}=(a_{ij})_{4\times 4}\in M_4(\k)$ with
$$ \left(
     \begin{array}{c}
       \sigma(e_1) \\
       \sigma(e_2) \\
       \sigma(e_3) \\
       \sigma(e_4)
     \end{array}
   \right)=A_{\sigma}\left(
                       \begin{array}{c}
                         e_1 \\
                         e_2 \\
                         e_3 \\
                         e_4
                       \end{array}
                     \right).$$ Such $\sigma\in \mathrm{Aut}_{\k}(\mathcal{E})$ if and only if
                     $$A_{\sigma}\in \mathrm{GL}_4(\k) \quad \text{and}\quad \sigma(e_i\cdot e_j)=\sigma(e_i)\sigma(e_j), \, \text{for any}\, \, i,j =1,2,3,4.$$
Therefore, $\sigma\in \mathrm{Aut}_{\k}(\mathcal{E})$ if and only if
$$ \begin{cases}|(a_{ij})_{4\times 4}|\neq 0, \sigma(e_1)=e_1, [\sigma(e_3)]^2=0=[\sigma(e_4)]^2,\\
[\sigma(e_2)]^2=\sigma(e_3), \sigma(e_2\cdot e_3)=\sigma(e_3\cdot e_2)=\sigma(e_4)\\
\sigma(e_3)\cdot \sigma(e_4)=\sigma(e_4)\cdot \sigma(e_3)=0, \sigma(e_2)\cdot \sigma(e_4)=\sigma(e_4)\cdot \sigma(e_2)=0,
\end{cases}$$
if and only if $$\begin{cases} c_{22}\neq 0, c_{11}=1,c_{12}=c_{13}=c_{14}=0\\
c_{21}=c_{31}=c_{32}=c_{41}=c_{42}=c_{43}=0\\
c_{33}=c_{22}^2, c_{44}=c_{22}^3, c_{34}=2c_{22}c_{23}
\end{cases}$$
Hence \begin{align*}\mathrm{Aut}_{\k}\mathcal{E} &\cong \left\{\left(
                                            \begin{array}{cccc}
                                              1 & 0 & 0  & 0\\
                                              0 & a & b  & c \\
                                              0 & 0 & a^2&  2ab\\
                                              0 & 0 & 0  & a^3
                                            \end{array}
                                          \right)\,\,| \,\, a\in \k^{\times},b, c\in \k\right\}\\
                                          &\cong \left\{\left(
                                            \begin{array}{ccc}
                                               a & b  & c \\
                                               0 & a^2&  2ab\\
                                               0 & 0  & a^3
                                            \end{array}
                                          \right)\,\,| \,\, a\in \k^{\times},b, c\in \k\right\}
                                          \end{align*}
Since $\mathcal{E}$ is commutative, we have $\mathrm{Aut}_{\k}(\mathcal{E})\cong \mathrm{Out}_{\k}(\mathcal{E})$. By \cite[Remark 4.4]{MYH}, we have
$\mathrm{Pic}_{\k}(\mathcal{E})\cong \mathrm{Out}_{\k}(\mathcal{E})$ and $\mathrm{DPic}_{\k}(\mathcal{E})=\mathbb{Z}\times \mathrm{Pic}_{\k}(\mathcal{E})$. Thus $$\mathrm{DPic}(\mathscr{A}_2)\cong \mathrm{DPic}_{\k}(E)\cong \mathrm{DPic}_{\k}(\mathcal{E})\cong \mathbb{Z}\times \left\{\left(
                                            \begin{array}{ccc}
                                              a & b & c \\
                                              0 & a^2 & 2ab \\
                                              0 & 0 & a^3 \\
                                            \end{array}
                                          \right)\,\,| \,\, a\in \k^{\times},b,c\in \k\right\}$$
by \cite[Theorem 4.3]{MYH}.

\end{proof}

\section{explanations for the problem}
In this section, we will show that $\mathscr{A}_1$ and $\mathscr{A}_2$ can serve as examples to give a negative answer to Dugas's problem, which is also mentioned and studied in \cite{PPZ}.
Due to the computation results in Section \ref{Aone} and Section \ref{Atwo}, we can prove further that $\mathrm{DPic}(\mathscr{A}_1)\not \cong\mathrm{DPic}(\mathscr{A}_2)$.
\begin{prop}\label{notiso}
The groups $\mathrm{DPic}(\mathscr{A}_1)$ and $\mathrm{DPic}(\mathscr{A}_2)$ are not isomorphic.
\end{prop}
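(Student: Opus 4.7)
The plan is to distinguish $\mathrm{DPic}(\mathscr{A}_1)$ and $\mathrm{DPic}(\mathscr{A}_2)$ through the number of conjugacy classes contained in the derived subgroup, which is a group-theoretic invariant. Write $\mathrm{DPic}(\mathscr{A}_i)\cong\mathbb{Z}\times G_i$ for the matrix groups $G_i$ from Propositions \ref{aone} and \ref{atwo}. Since the $\mathbb{Z}$-factor is central, $[\mathrm{DPic}(\mathscr{A}_i),\mathrm{DPic}(\mathscr{A}_i)]=[G_i,G_i]$ and the conjugation action on the derived subgroup factors through $G_i$, so it suffices to compare the $G_i$-conjugacy classes contained in $G_i'$.

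First I would identify $G_i'$ explicitly. The projection $\pi_i\colon G_i\to\k^\times$ onto the $(1,1)$-entry is a surjective homomorphism to an abelian group, so $G_i'\subseteq\ker\pi_i$. In $G_1$ the kernel is $\{(1,b):b\in\k\}\cong(\k,+)$, and in $G_2$ it is $H=\{(1,b,c):b,c\in\k\}$. A short commutator computation (e.g.\ $[(a,0),(1,b)]=(1,b(1-a)/a)$ in $G_1$) produces each element of $\ker\pi_i$ as a commutator, giving $G_1'=\ker\pi_1$ and $G_2'=H$. The multiplication on $H$ is not quite additive, but the change of variable $(1,b,c)\mapsto(b,c-b^{2})$ gives a group isomorphism $H\cong(\k\oplus\k,+)$.

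Next I determine how $\k^\times\cong G_i/G_i'$ acts on $G_i'$ by conjugation. A direct matrix calculation shows that in the coordinates above, $a\in\k^\times$ acts on $G_1'$ by $b\mapsto b/a$ and on $G_2'$ by $(B,C)\mapsto(B/a,C/a^{2})$. For $G_1$ the action is ordinary scalar multiplication, yielding exactly two orbits, namely $\{0\}$ and $\k^\times$, so $\mathrm{DPic}(\mathscr{A}_1)$ has exactly two conjugacy classes inside its derived subgroup. For $G_2$ the two diagonal weights $-1$ and $-2$ differ, so the orbits of nonzero $(B,C)$ with $B\neq 0$ are the punctured parabolas $\{(B,\lambda B^{2}):B\in\k^\times\}$ indexed by $\lambda=C/B^{2}\in\k$, together with $\{(0,0)\}$ and $\{0\}\times\k^\times$. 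Since $\k$ is infinite, this is infinitely many orbits, so $\mathrm{DPic}(\mathscr{A}_2)$ has infinitely many conjugacy classes inside its derived subgroup.

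Any isomorphism $\mathrm{DPic}(\mathscr{A}_1)\cong\mathrm{DPic}(\mathscr{A}_2)$ must carry the derived subgroup (a characteristic subgroup) to the derived subgroup and bijectively match conjugacy classes, so $2$ versus infinitely many conjugacy classes inside the derived subgroup is a contradiction. The one non-routine step is the linearisation of the multiplication on $H$ that recognises $G_2'$ as a $\k^\times$-module with weights $-1$ and $-2$; once this is in place the orbit count is immediate.
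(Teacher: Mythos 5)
Your argument is correct, and its skeleton coincides with the paper's: both identify $[G_i,G_i]$ as the kernel of the projection onto the $(1,1)$-entry $a$, both reduce to the conjugation action of $G_i/[G_i,G_i]\cong\k^{\times}$ on the abelian derived subgroup, and both find that this action has weight $-1$ for $G_1$ and weights $(-1,-2)$ for $G_2$. Where you genuinely diverge is in the invariant extracted at the end. The paper counts $\k^{\times}$-invariant subgroups of $[G_i,G_i]$ (exactly two for $G_1$, at least four for $G_2$, the extra two being the one-parameter subgroups with $b=0$ and with $c=b^{2}$) and must then set up an explicit commutative diagram to justify that an isomorphism $G_1\to G_2$ induces a bijection between these sets. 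You instead count $G_i$-conjugacy classes contained in $G_i'$ --- two for $G_1$ versus infinitely many for $G_2$, the latter indexed by $\lambda=C/B^{2}$ --- and this invariant needs no extra justification, since an isomorphism carries the derived subgroup to the derived subgroup and matches conjugacy classes bijectively. Your linearisation $(1,b,c)\mapsto(b,c-b^{2})$ of $\ker\sigma_2$, which the paper never makes explicit (one checks the product there is $(1,b+b',c+c'+2bb')$, so the map is indeed a homomorphism to $(\k\oplus\k,+)$), is what makes the orbit structure transparent; the paper's invariant subgroup $\{(1,t,t^{2})\}$ is exactly your weight space $C=0$. You also treat the factor $\mathbb{Z}$ more carefully: the paper asserts that it suffices to show $G_1\not\cong G_2$, which implicitly invokes cancellation of a direct factor, whereas your invariant is computed directly on $\mathbb{Z}\times G_i$ (whose derived subgroup is $\{0\}\times G_i'$) and sidesteps the issue. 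Both proofs are valid; yours is the more elementary and self-contained of the two.
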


\begin{proof}
By Proposition \ref{aone} and Proposition \ref{atwo}, we have
$$\mathrm{DPic}(\mathscr{A}_1)\cong \mathbb{Z}\times  \left\{\left(
                                            \begin{array}{cc}
                                               a & b \\
                                               0 & a^2 \\
                                            \end{array}
                                          \right)\quad| \quad a\in \k^{\times},b\in \k\right\}$$
 and
$$\mathrm{DPic}(\mathscr{A}_2)\cong \mathbb{Z}\times \left\{\left(
                                            \begin{array}{ccc}
                                              a & b & c \\
                                              0 & a^2 & 2ab \\
                                              0 & 0 & a^3 \\
                                            \end{array}
                                          \right)\,\,| \,\, a\in \k^{\times},b,c\in \k\right\}.$$
Let $$G_1=\left\{\left(
                                            \begin{array}{cc}
                                               a & b \\
                                               0 & a^2 \\
                                            \end{array}
                                          \right)\quad| \quad a\in \k^{\times},b\in \k\right\}$$ and $$G_2=\left\{\left(
                                            \begin{array}{ccc}
                                              a & b & c \\
                                              0 & a^2 & 2ab \\
                                              0 & 0 & a^3 \\
                                            \end{array}
                                          \right)\,\,| \,\, a\in \k^{\times},b,c\in \k\right\}.$$
It suffices to show that the groups $G_1$ and $G_2$ are not isomorphic.
Define group homomorphisms
\begin{align*}
\sigma_1: &\quad\quad G_1 \quad \rightarrow \quad \k^{\times} \\
    &\left(
                                            \begin{array}{cc}
                                               a & b \\
                                               0 & a^2 \\
                                            \end{array}
                                          \right)   \mapsto \quad  a
\end{align*}
and
\begin{align*}
\sigma_2: &\quad\quad G_2 \quad \rightarrow \quad \k^{\times}\\
&\left(
                                            \begin{array}{ccc}
                                              a & b & c \\
                                              0 & a^2 & 2ab \\
                                              0 & 0 & a^3 \\
                                            \end{array}
                                          \right) \mapsto \quad  a.
\end{align*}
We have $$\mathrm{ker}(\sigma_1)=\left\{\left(
                                            \begin{array}{cc}
                                               1 & b \\
                                               0 & 1 \\
                                            \end{array}
                                          \right)\quad| \quad b\in \k\right\}\vartriangleleft G_1$$ and $$\mathrm{ker}(\sigma_2)=\left\{\left(
                                            \begin{array}{ccc}
                                              1 & b & c \\
                                              0 & 1 & 2b \\
                                              0 & 0 & 1 \\
                                            \end{array}
                                          \right)\,\,| \,\, b,c\in \k\right\} \vartriangleleft G_2.$$
Since $k^{\times}$ is abelian, one see easily that $[G_1,G_1]\subseteq \mathrm{ker}(\sigma_1)$ and $[G_2,G_2]\subseteq \mathrm{ker}(\sigma_1)$. On the other hand,  $$\forall \left(
                                            \begin{array}{cc}
                                               1 & b \\
                                               0 & 1 \\
                                            \end{array}
                                          \right)\in \mathrm{ker}(\sigma_1)\quad \text{and} \quad\left(
                                            \begin{array}{ccc}
                                              1 & b & c \\
                                              0 & 1 & 2b \\
                                              0 & 0 & 1 \\
                                            \end{array}
                                          \right)\in \mathrm{ker}(\sigma_2),$$ they can be respectively expressed by
$$\left(
                                            \begin{array}{cc}
                                               \frac{1}{2} & 0 \\
                                               0 & \frac{1}{4} \\
                                            \end{array}
                                          \right)\left(
                                            \begin{array}{cc}
                                               1 & b \\
                                               0 & 1 \\
                                            \end{array}
                                          \right)\left(
                                            \begin{array}{cc}
                                               \frac{1}{2} & 0 \\
                                               0 & \frac{1}{4} \\
                                            \end{array}
                                          \right)^{-1}\left(
                                            \begin{array}{cc}
                                               1 & b \\
                                               0 & 1 \\
                                            \end{array}
                                         \right)^{-1}$$
and $$
\left(
  \begin{array}{ccc}
    \frac{1}{2} & 0 & 0 \\
    0 & \frac{1}{4} & 0 \\
    0 & 0 & \frac{1}{8} \\
  \end{array}
\right)\left(
  \begin{array}{ccc}
    1 & b & \frac{c+2b^2}{3} \\
    0 & 1 & 2b \\
    0 & 0 & 1 \\
  \end{array}
\right)\left(
  \begin{array}{ccc}
    \frac{1}{2} & 0 & 0 \\
    0 & \frac{1}{4} & 0 \\
    0 & 0 & \frac{1}{8} \\
  \end{array}
\right)^{-1}\left(
  \begin{array}{ccc}
    1 & b & \frac{c+2b^2}{3} \\
    0 & 1 & 2b \\
    0 & 0 & 1 \\
  \end{array}
\right)^{-1}.
$$
Hence $$[G_1,G_1]=\mathrm{ker}(\sigma_1)=\left\{\left(
                                            \begin{array}{cc}
                                               1 & b \\
                                               0 & 1 \\
                                            \end{array}
                                          \right)\quad| \quad b\in \k\right\}$$ and $$[G_2,G_2]=\mathrm{ker}(\sigma_2)=\left\{\left(
                                            \begin{array}{ccc}
                                              1 & b & c \\
                                              0 & 1 & 2b \\
                                              0 & 0 & 1 \\
                                            \end{array}
                                         \right)\,\,| \,\, b,c\in \k\right\}.$$
Then $G_i/[G_i,G_i]\cong \k^{\times}$, $i=1,2$. One sees that $[G_1,G_1]$ and $[G_2,G_2]$ are abelian.
For any $i\in \{1,2\}$,  the group action of the quotient group $G_i/[G_i,G_i]$ on $[G_i,G_i]$ by conjugation is well defined. More precisely, we have the group action
\begin{align*}
\lambda_i:  G_i/[G_i,G_i]  &  \times [G_i,G_i] \to [G_i,G_i].\\
                 &(\overline{g_i}, c_i)\mapsto g_ic_ig_i^{-1}
\end{align*}
For any $$\left(
           \begin{array}{cc}
             a & b \\
             0 & a^2 \\
           \end{array}
         \right)\in G_1, \left(
                           \begin{array}{cc}
                             1 & s \\
                             0 & 1 \\
                           \end{array}
                         \right)\in [G_1,G_1]$$
 and $$\left(
           \begin{array}{ccc}
             a & b & c \\
             0 & a^2 & 2ab \\
             0 & 0 & a^3 \\
           \end{array}
         \right)\in G_2,
\left(
           \begin{array}{ccc}
             1 & s & t \\
             0 & 1 & 2s \\
             0 & 0 & 1 \\
           \end{array}
         \right)\in [G_2,G_2],$$ we have
$$\left(
           \begin{array}{cc}
             a & b \\
             0 & a^2 \\
           \end{array}
         \right)\left(
                           \begin{array}{cc}
                             1 & s \\
                             0 & 1 \\
                           \end{array}
                         \right)\left(
           \begin{array}{cc}
             a & b \\
             0 & a^2 \\
           \end{array}
         \right)^{-1}=\left(
                        \begin{array}{cc}
                          1 & \frac{s}{a} \\
                          0 & 1 \\
                        \end{array}
                      \right)$$
and
$$ \left(
           \begin{array}{ccc}
             a & b & c \\
             0 & a^2 & 2ab \\
             0 & 0 & a^3 \\
           \end{array}
         \right)\left(
           \begin{array}{ccc}
             1 & s & t \\
             0 & 1 & 2s \\
             0 & 0 & 1 \\
           \end{array}
         \right)\left(
           \begin{array}{ccc}
             a & b & c \\
             0 & a^2 & 2ab \\
             0 & 0 & a^3 \\
           \end{array}
         \right)^{-1}=\left(
                        \begin{array}{ccc}
                          1 & \frac{s}{a} & \frac{t}{a^2} \\
                          0 & 1 & \frac{2s}{a} \\
                          0 & 0 & 1 \\
                        \end{array}
                      \right).$$
These imply group actions of $\k^{\times}$ on $[G_1,G_1]$ and $[G_2,G_2]$ respectively defined by
\begin{align*}
\chi_1:\quad &\k^{\times} \times [G_1,G_1] \to [G_1,G_1] \\
&(a, \left(
       \begin{array}{cc}
         1 & s \\
         0 & 1 \\
       \end{array}
     \right))\mapsto \left(
                       \begin{array}{cc}
                         1 & \frac{s}{a} \\
                         0 & 1 \\
                       \end{array}
                     \right)
\end{align*}
and
\begin{align*}
\chi_2:\quad &\k^{\times} \times [G_2,G_2] \to [G_2,G_2] \\
&(a, \left(
       \begin{array}{ccc}
         1 & s & t \\
         0 & 1 & 2s \\
         0 & 0 & 1 \\
       \end{array}
     \right))\mapsto \left(
                        \begin{array}{ccc}
                          1 & \frac{s}{a} & \frac{t}{a^2} \\
                          0 & 1 & \frac{2s}{a} \\
                          0 & 0 & 1 \\
                        \end{array}
                      \right).
\end{align*}
It is easy to see that the $\k^{\times}$ invariant subgroups of $[G_1,G_1]$ under the action of $\chi_1$ are exactily the trivial subgroups $$\left(
                       \begin{array}{cc}
                         1 & 0 \\
                         0 & 1 \\
                       \end{array}
                     \right)\quad \text{and}\quad [G_1,G_1].$$
While the set of $\k^{\times}$ invariant subgroups of $[G_2,G_2]$ under the action of $\chi_2$ contains at least the following four distinct subgroups:
\begin{enumerate}
\item $\left(
       \begin{array}{ccc}
         1 & 0 & 0 \\
         0 & 1 & 0 \\
         0 & 0 & 1 \\
       \end{array}
     \right)$;
\item $\left\{\left(
       \begin{array}{ccc}
         1 & 0 & t \\
         0 & 1 & 0 \\
         0 & 0 & 1 \\
       \end{array}
     \right)\,\, |\,\, t\in \k\right\}$;
\item $\left\{\left(
       \begin{array}{ccc}
         1 & t & t^2 \\
         0 & 1 & 2t \\
         0 & 0 & 1 \\
       \end{array}
     \right)\,\, |\,\, t\in \k\right\}$;
\item $[G_2,G_2]$.
\end{enumerate}
If $G_1\cong G_2$, then there is a group isomorphism $\omega: G_1\to G_2$. It induces two group isomorphisms $\psi: [G_1,G_1]\to [G_2,G_2]$
and $\varepsilon: G_1/[G_1,G_1]\to G_2/[G_2,G_2]$, where $\psi=\omega|_{[G_1,G_1]}$ and $\varepsilon(\overline{g_1})=\overline{\omega(g_1)}$ for any $g_1\in G_1$. We have the following commutative diagram
\begin{align*}
\xymatrix{  &G_1/[G_1,G_1]\times [G_1,G_1] \ar[r]^{\quad\quad\quad\lambda_1}\ar[d]_{\varepsilon\times \psi} &  [G_1,G_1]\ar[d]^{\psi}
\\
  & G_2/[G_2,G_2]\times [G_2,G_2]  \ar[r]^{\quad\quad\quad\lambda_2} & [G_2,G_2],
\\}
\end{align*}
since $$\psi \circ \lambda_1(\overline{g_1}, c_1)=\psi(g_1c_1g_1^{-1})=\omega(g_1)\omega(c_1)\omega(g_1)^{-1}$$
and $$\lambda_2\circ (\varepsilon\times \psi)(\overline{g_1}, c_1)=\lambda_2(\overline{\omega(g_1)}, \omega(c_1))=\omega(g_1)\omega(c_1)\omega(g_1)^{-1}$$
for any $\overline{g_1}\in G_1/[G_1,G_1]$ and $c_1\in [G_1,G_1]$. This implies
the following commutative diagram
\begin{align*}
\xymatrix{  &\k^{\times}\times [G_1,G_1] \ar[r]^{\quad \chi_1}\ar[d]_{\overline{\sigma_2}\circ \varepsilon \circ \overline{\sigma_1}^{-1}\times \psi} &  [G_1,G_1]\ar[d]^{\psi}
\\
  & \k^{\times}\times [G_2,G_2]  \ar[r]^{\quad\chi_2} & [G_2,G_2].
\\}
\end{align*}
Hence the set of the invariant subgroups of $[G_1,G_1]$ under the group action $\chi_1$ is in one to one correspondence with the set of the invariants subgroups of $[G_2,G_2]$ under $\chi_2$. We reach a contradiction. So $G_1\not \cong G_2$ and $\mathrm{DPic}(\mathscr{A}_1)\not\cong \mathrm{DPic}(\mathscr{A}_2)$.
\end{proof}

 Since derived Picard group of an algebra is a derived invariant, we conclude that $\mathscr{A}_1$ and $\mathscr{A}_2$ are not derived equivalent by Proposition \ref{notiso}. On the other hand, we have $$H(\mathscr{A}_1)=\frac{\k[\lceil x_3 \rceil, \lceil x_1x_3+x_3x_1 \rceil]}{( \lceil x_3\rceil^2 )}\,\, \text{and}\,\, H(\mathscr{A}_2)=\frac{\k[\lceil y_3\rceil,\lceil y_1^2+y_2y_3+y_3y_2\rceil]}{(\lceil y_3\rceil^2 )}$$
by Proposition \ref{cohaone} and Proposition \ref{cohatwo}, respectively. One sees that $H(\mathscr{A}_1)
\cong H(\mathscr{A}_2)$ as graded algebras. It indicates that the derived equivalence of DG algebras are not governed by their cohomology rings.
We give a negative answer to Dugas's problem in \cite[Section 7]{Dug}. The reason for this phenomenon lies in the fact that much intrinsic information of a non-formal DG algebra $\mathscr{A}$ is missing after taking the cohomology functor. To retrieve the lost information, we should endow the cohomology ring with suitable $A_{\infty}$-algebra structures. Indeed, it is well known that $H(\mathscr{A})$ admits an $A_{\infty}$-algebra structure called the minimal model of $\mathscr{A}$ such that there is an $\mathscr{A}_{\infty}$-quasi-isomorphism $\mathscr{A}\to H(\mathscr{A})$. Obviously, cochain DG algebras $\mathscr{A}$ and $\mathscr{B}$ are derived equivalent, if their minimal models are derived equivalent.

\section*{Acknowledgments}
X.-F. Mao is supported by the National Natural Science Foundation of
China (No. 11871326).

\def\refname{References}

\end{document}